\theoremstyle{plain}
\newtheorem{theorem}{Theorem}
\newtheorem{corollary}[theorem]{Corollary}
\newtheorem{lemma}[theorem]{Lemma}
\newtheorem{assumption}[theorem]{Assumption}
\theoremstyle{definition}
\newtheorem{example}{Example}
\newtheorem*{remark}{Remark}
\newtheorem*{acknowledgements}{Acknowledgements}
\newcommand{\RR}{\mathbb{R}}
\newcommand{\NN}{\mathbb{N}}
\newcommand{\ZZ}{\mathbb{Z}}
\newcommand{\cC}{\mathcal{C}}
\newcommand{\drm}{\mathrm{d}}
\newcommand{\euler}{\mathrm{e}}
\newcommand{\cD}{\mathcal{D}}
\newcommand{\cE}{\mathcal{E}}
\DeclareMathOperator{\Deg}{Deg}
\DeclareMathOperator{\arsinh}{arsinh}
\DeclareMathOperator{\diam}{diam}
\newcommand{\eat}[1]{}
\let\oldint\int
\renewcommand{\int}{\oldint\limits}
\newcommand{\Hmm}[1]{\leavevmode{\marginpar{\tiny%
			$\hbox to 0mm{\hspace*{-0.5mm}$\leftarrow$\hss}%
			\vcenter{\vrule depth 0.1mm height 0.1mm width \the\marginparwidth}%
			\hbox to
			0mm{\hss$\rightarrow$\hspace*{-0.5mm}}$\\\relax\raggedright #1}}}
\begin{document}

\title{Off-diagonal upper heat kernel bounds on graphs with unbounded geometry
}
\author{Christian Rose\thanks{christian.rose@uni-potsdam.de}
}
\affil[]{Institut f\"ur Mathematik, Universit\"at Potsdam, 		14476  Potsdam, Germany}
\date{\today}
\maketitle
\begin{abstract}
Results regarding off-diagonal Gaussian upper heat kernel bounds on discrete weighted graphs with possibly unbounded geometry are summarized and related. 

After reviewing uniform upper heat kernel bounds obtained by Carlen, Kusuoka, and Stroock, 
the universal Gaussian term on graphs found by Davies is addressed and related to corresponding results in terms of intrinsic metrics. Then we present a version of Grigor'yan's two-point method with Gaussian term involving an intrinsic metric. A discussion of upper heat kernel bounds for graph Laplacians with possibly unbounded but integrable weights on bounded combinatorial graphs preceeds the presentation of compatible bounds for anti-trees, an example of combinatorial graph with unbounded Laplacian.
Characterizations of localized heat kernel bounds in terms of intrinsic metrics and universal Gaussian are reconsidered. Finally, the problem of optimality of the Gaussian term is discussed by relating Davies' optimal metric with the supremum over all intrinsic metrics.
\\

	\noindent \textbf{Keywords:} graph, heat kernel, Gaussian bound, unbounded geometry
	\\
	\noindent \textbf{2020 MSC:} 39A12, 35K08, 60J74
\end{abstract}

\tableofcontents

\section{Introduction}
On spaces with appropriate notion of Laplace operator acting on functions, one defines the heat kernel to be the minimal fundamental solution of the corresponding heat equation. 
Heat kernels are very sensitive with respect to the underlying geometry of the space, and encode local and global analytic, geometric and stochastic properties. 

An explicit representation formula for the heat kernel is known only in rare situations, among them the heat kernel $p$ on $\RR^n$ given by the Gau\ss{}-Weierstra\ss{} function
\[
p_t(x,y)=\frac{1}{\sqrt{4\pi t^n}}\exp\left(-\frac{|x-y|^2}{4t}\right)
\]
for all $x,y\in\RR^n$ and $t>0$.

The lack of explicit formulas leads to a  fundamental interest in good quantitative bounds of the heat kernel on more general spaces in terms of their geometric properties. Building upon the ingenious work of Aronson, de Giorgi, Nash, Moser, and Li and Yau \cite{Nash-58, Moser-64, Aronson-67, LiYau-86}, this led to a fruitful and intense research area in different contexts over the last decades. Of particular interest are geometric characterizations of upper or lower bounds on heat kernels comparable to the Gau\ss{}-Weierstra\ss{} function, called Gau\ss{}ian bounds. We refer to the exponential term as Gaussian term. Major results include corresponding bounds on Riemannian manifolds, fractals, fractional Laplace operators, and general Dirichlet forms, see \cite{Grigoryan-94, Grigoryan-09, CoulhonG-98, Varopoulos-85,CarlenKS-87,Coulhon-92,SaloffCoste-92,Grigoryan-94,
SaloffCoste-92a,Delmotte-99,SaloffC-01,BCS, BarlowChen-16, GrigoryanHH} for a non-exhaustive list of references.
\\

This exposition concentrates on reviewing Gaussian upper heat kernel
 upper bounds for the continous-time heat kernel of discrete weighted graphs with infinite vertex set and possibly unbounded geometry. Loosely speaking, a graph is a discrete set of vertices which are connected by non-directed weighted edges without loops and a measure on the vertices, cf.~Section~\ref{sec:graphs} for precise definitions. A Laplace-type operator acting on functions defined on the vertices is given by mimicking the mean value property for harmonic functions in the combinatorial neighborhood of a given vertex. 
 
 For a prototypical example of a space with explicit heat kernel, consider the graph with vertex set being the set of integers $\ZZ$, where we connect $x,y\in\ZZ$ by an edge if and only if their distance is one, i.e., if and only if $|x-y|=1$ and zero otherwise. Define the operator
\[
\Delta u(n)=2u(n)-u(n-1)-u(n+1),
\]
for $n\in\ZZ$ and $u\in \cC(\ZZ)$. Its Friedrichs extension is a non-negative, self-adjoint and bounded operator in $\ell^2(\ZZ,2)$.
By using Fourier series and identities for Bessel functions, the heat kernel $p$ associated to $\Delta$ has the explicit representation 
\[
p_t(x,y)=\frac{1}{\sqrt\pi \Gamma(d+1/2)}\euler^{-2t}t^d\int_{-1}^1(1-z^2)^{d-1/2}\euler^{2tz}\drm z,
\]
for $x,y\in\ZZ$ and $t>0$, \cite[p.~376]{AbramowitzStegun-65}.
The number $d=d(x,y)$ denotes the combinatorial distance between $x$ and $y$, that is, the number of edges of the shortest path connecting $x$ and $y$. Although explicit, it is desirable to have upper bounds on $p$ which are comparable to the Gauss-Weierstrass function. An essentially exact estimate was obtained in \cite[Theorem~3.5]{Pang-93}: there exists a constant $c>0$ such that 
\[
\frac{c^{-1}}{\sqrt t\vee d}\exp\left(-2t\ \zeta\left(\frac{d}{2t}\right)\right)\leq p_t(x,y)\leq \frac{c}{\sqrt t\vee d}\exp\left(-2t\ \zeta\left(\frac{d}{2t}\right)\right)
\]
for $x,y\in\ZZ$, $d,t>0$,
where 
\[
\zeta\big(x\big)
=
x\arsinh\left(x\right) 
+1-\sqrt{x^2+1},\qquad x\geq 0.\]
Moreover, for $x,y\in\ZZ$, $x\neq y$ we have 
\[
t\ p_t(x,y)\to 0,\qquad t\to 0,
\]
and
\[
2t\ \zeta\left(\frac{d}{2t}\right)\sim \frac{d^2}{4t},\qquad t\to \infty,
\]
where $\sim$ means that the left-hand side divided by the right-hand side converges to one.
Clearly, the long-time asymptotics of the heat kernel on the integers and on Euclidean space are comparable. In contrast, the short-time behavior differs from the Varadhan asymptotics for the Gauss-Weierstrass function, and this fact holds for general discrete graphs \cite{KLMST-16}.


Analogous to the case of manifolds, one aims at geometric characterizations of heat kernel bounds which are comparable to the behavior of the heat kernel on the integers. 
Unlike Riemannian manifolds, graphs come without a canonical choice for the vertex measure, and the Laplacian generates a non-local Dirichlet form on the combinatorial structure. A choice for a  metric is also necessary in order to formulate Gaussian bounds and to encode geometric properties. 

If the graph is very tame, i.e., has very bounded geometry,  the characterization of upper heat kernel bounds is, apart from certain optimality questions, nowadays quite well-understood in terms of the combinatorial distance, see, e.g., the monographs \cite{Woess-00,Barlow-book, Grigoryan-graphbook} and the references therein. This differs tremendously if the geometry of the graph is not bounded, which can happen in different ways: the combinatorial graph, the edge weights and measure need not be bounded above or below, but the Laplacian can be bounded even if the combinatorial geometry is not uniformly bounded. 
\eat{Certain problems can be solved by the use of intrinsic metrics, which deliver the existence of good cut-off functions. }
 \\

The present paper aims at summarizing results regarding upper heat kernel bounds in the context of graphs with possibly unbounded geometry. It addresses the following:
\begin{itemize}
\item Uniform, i.e., functional analytic characterizations of off-diagonal upper heat kernel bounds.
\item Universal heat kernel bounds on graphs.
\item Sharp bounds on heat kernels defined on operators with mean-integrable weights on combinatorial graphs.
\item Heat kernel bounds on anti-trees.
\item Recent characterizations of localized heat kernel bounds valid for graphs with respect to intrinsic metrics.
\item  The problem of optimality of involved metrics.
\eat{ developped in \cite{KellerRose-23, KellerRose-22a, Rose-24}. 
}
\end{itemize}

In order to stay concise, the focus lies on results concerning upper heat kernel bounds instead of the presentation of techniques. This will allow for a precise comparison of the existing literature on the topic. Few proofs will be presented in order to effectively compare the results and to clarify particular statements. For a deeper study, the reader's attention will be drawn to the cited literature. If it turns out that some contribution is missing, we would be greatful for references and would include them in the future.

The Gaussian upper bounds treated here are universal in the sense that they hold for all discrete graphs, and are compared to the Gaussian estimate on the integers. However, there are graphs which have a much better Gaussian estimate. Graphs which obey a fractal-like global structure like the discrete Sierpinski gasket provide examples of graphs with sub-Gaussian behavior of the heat kernel, i.e., faster than the Euclidean Gaussian. An interested reader is referred to \cite{BarlowB-92,GrigoryanT-01,Barlow-book, GrigoryanHH, MuruganSC-23,Murugan-24} and the references therein for a glympse on the related existing literature.

We do not address characterizations of full Gaussian bounds in terms of Harnack and Poincar\'e inequalities and volume doubling.  The characterization of full Gaussian heat kernel bounds in case of very bounded geometry was addressed in the fundamental work by Delmotte \cite{Delmotte-99}, cf.~\cite{Barlow-book}. It has recently been generalized to bounded Laplacian but possibly unbounded combinatorial geometry \cite{BarlowChen-16}. A corresponding characterization for graphs with unbounded geometry will appear in a forthcoming work.

\section{Laplace operator and  heat equation on graphs}\label{sec:graphs}

The terminology follows mostly \cite{KellerLW-21}.

Let $X$ be countable and
extend a function
 $m\colon X\to(0,\infty)$ of full support to a measure on $X$ via countable additivity.
A symmetric function $b\colon X\times X\to [0,\infty)$  with 
\[
b(x,x)=0\quad\text{and}\quad \deg_x:=\sum_{y\in X}b(x,y)<\infty
\]
for all $x\in X$ 
is called \emph{graph} over the measure space $(X,m)$. Often, such a graph is simply denoted by the triple $(X,b,m)$. We write $ x\sim y $ whenever $ b(x,y)>0 $ for $ x,y\in X $. A graph is called \emph{locally finite} if the set $\{y\in X\colon b(x,y)>0\}$ is finite for any $x\in X$. It is called connected if for any $x,y\in X$ there exists a finite sequence $x_1,\ldots,x_n\in X$, $n\in\NN$, such that $x_i\sim x_{i+1}$ for all $i\in\{1,\ldots, n-1\}$ and $x_1=x$ and $x_n=y$. 
\begin{center}
\emph{We assume in the following that all graphs are locally finite and connected.}
\end{center}
The Laplace operator $\Delta\colon \cC(X)\to\cC(X)$ on a graph acts on the set of functions $\cC(X)$ as
\[
\Delta f(x)=\frac1{m(x)}\sum_{y\in X} b(x,y)(f(x)-f(y)), 
\] 
for $ f\in \cC(X) $ and $  x\in X  $.
By local finiteness, $ \Delta $ maps the set of compactly supported functions $ \cC_{c}(X) $ into itself. Hence, the restriction of $ \Delta $ to $ \cC_c(X) $ is symmetric in $ \ell^{2}(X,m)$, the Hilbert space of square integrable functions on $(X,m)$ with respect to the inner product $\langle \cdot,\cdot\rangle_{\ell^2(X,m)}$.  By slight abuse of notation we also denote the closure of $\Delta\restriction_{\cC_c(X)}$ in $\ell^2(X,m)$ by   $\Delta\geq 0$.

The Dirichlet form corresponding to $\Delta$ is given by 
\[
\cE(f)=\frac{1}{2}\sum_{x,y\in X}b(x,y)(f(x)-f(y))^2
\]
for all $f$ in the domain $\cD(\cE)\subset \ell^2(X,m)$,  and form and operator are related via Green's formula
\[
\cE(f)=\sum_{x\in X}m(x)f(x)\Delta f(x)\], for all $v\in \cC(X)$. The energy measure $\Gamma$ of $\cE$ is given by
\[
\Gamma(f,g)(x)=\frac{1}{2m(x)}\sum_{x,y\in X}b(x,y)(f(x)-f(y))(g(x)-g(y)),
\] 
and we simply write $\Gamma(f)(x)=\Gamma(f,f)(x)$ for all $x\in X$ and $f,g\in \cC(X)$.

The (continuous-time) heat semigroup $(\euler^{-t\Delta})_{t\geq 0}$, acts in $ \ell^{2}(X,m) $ and has an integral kernel $p\colon [0,\infty)\times X\times X\to[0,\infty)$, called the \emph{heat kernel},
satisfying
\[
\euler^{-t\Delta}f(x)=\sum_{y\in X} m(x) p_t(x,y)f(y)
\]
for all $ x\in X, t\geq 0, f\in \ell^2(X,m) $.
For an initial condition $ f\in \ell^{2}(X,m) $, the function $ u=\euler^{-t\Delta}f $ solves the heat equation
\[
\frac{d}{dt}u=-\Delta u, \quad u(0,\cdot)=f.
\] 
A direct application of the representation of the heat semigroup shows 
\[
p_t(x,y)=\frac{1}{m(x)m(y)}\left\langle \delta_{x},\euler^{-t\Delta}\delta_{y}\right\rangle_{\ell^2(X,m)},
\]
where $\delta_{x}$ denotes the function which is one at $x\in X$ and zero otherwise. For basic properties of the heat kernel, see \cite{KellerLVW-15,KLMST-16,KellerLW-21}.

Heat kernels of graphs correspond to the transition density of the jump process on the underlying graph associated to the Laplacian $\Delta$. Two 
 examples for the measure $m$ are frequently discussed in the literature. The normalizing measure $m=\deg$ generates so-called constant speed random walks. Variable speed random walks on graphs correspond to the counting measure $m=1$.

Boundedness of the Laplacian of a graph $b$ over $(X,m)$ is determined by the boundedness of the weighted vertex degree
\[
\Deg(x)=\frac{\deg(x)}{m(x)}=\frac{1}{m(x)}\sum_{y\in X}b(x,y),\qquad x\in X.
\]
In fact, the Laplacian is a bounded operator if and only if $\sup_X\Deg<\infty$, see, e.g., \cite[Lemma~1]{Davies-93}. This means in particular that the Laplacian with respect to the normalizing measure is always bounded, while it might not with respect to the counting or a general measure. Classical results on heat kernel bounds only consider the normalizing measure. In the following, we will not impose that the Laplacian is bounded.


\section{Characterization of uniform off-diagonal bounds}\label{sec:uniform}

This section briefly addresses characterizations of uniform off-diagonal heat kernel bounds. Uniformity in the present context refers to uniform-in-time on-diagonal heat kernel bounds, i.e., the non-Gaussian term is spatially independent of the heat kernel.

In his famous work, Varopoulos \cite{Varopoulos-85} characterized polynomial uniform upper heat kernel bounds in terms of Sobolev inequalities for general Dirichlet forms. Sobolev inequalities imply Nash inequalities, such that the upper heat kernel bound follows from Nash's original ideas \cite{Nash-58}\footnote{Nash includes Stein's "quick proof" of the Nash inequality in Euclidean space.}. For the reverse implication, Varopoulos transfers Hardy-Littlewood theory to abstract semigroups. 

About the same time, Davies observed that off-diagonal upper heat kernel bounds of heat kernels of elliptic operators are a consequence of on-diagonal upper bounds \cite{Davies-87}. The latter are equivalent to log-Sobolev inequalities. Davies derives log-Sobolev inequalities for elliptic operators sandwiched by  control functions and derived their on-diagonal heat kernel bounds. Optimization with respect to the sandwiching function then yields off-diagonal bounds for the original heat kernel. This method is nowadays referred to as Davies' method\footnote{According to Davies, Varopoulos discovered the same method independently \cite{Varopoulos-89}.}.

 Carlen, Kusuoka, and Stroock observed that Davies' method can be combined with Nash's approach to heat kernel bounds to obtain a characterization of uniform off-diagonal heat kernel bounds in terms of Nash inequalities \cite{CarlenKS-87}. Their main result, originally formulated for general Dirichlet forms, reads in the present context of discrete graphs as follows.

\begin{theorem}[{\cite[Theorem~3.25]{CarlenKS-87}}]\label{thm:ckkw}
Let $n>0$. The following properties are equivalent:
\begin{enumerate}[(i)]
\item There is a constant $c>0$ such that the Nash inequality 
\[
 \|f\|_2^{2+4/n}\leq c\ \cE(f)\|f\|_1^{4/n}
\]
holds for all $f\in \cD(\cE)$.
\item
For any $\epsilon\in(0,1)$ there is a constant $C_\epsilon>0$ such that the heat kernel $p$ satisfies
\[
p_t(x,y)\leq \frac{C_\epsilon}{ t^{\frac{n}2}}\exp\left( -\frac{d_\cE'(x,y)^2}{4(1+\epsilon) t}\right),
\]
for all $x,y\in X$ and $t>0$,
{, where 
\[
d_\cE'(x,y):=\sup\{\psi(x)-\psi(y)\colon \psi\in \cD(\cE), \Lambda(\psi)\leq 1 \},
\]
where
\[
\Lambda(\psi)^2
=
\left\| 
\euler^{-2\psi}\Gamma(\euler^{\psi})
\right\|_\infty \ \vee \
\left\| 
\euler^{2\psi}\Gamma(\euler^{-\psi})
\right\|_\infty.
\]
}
\end{enumerate}
\end{theorem}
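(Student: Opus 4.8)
The plan is to prove the equivalence via Davies' method, exploiting the fact that the Nash inequality behaves well under the exponential ``twisting'' that underlies the definition of $d'_\cE$ and $\Lambda$.

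\medskip

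\noindent\textbf{(i) $\Rightarrow$ (ii).} First I would fix a function $\psi\in\cD(\cE)$ with $\Lambda(\psi)\le 1$ and consider, for a parameter $\alpha\in\RR$, the twisted semigroup $P_t^{\alpha\psi}:=\euler^{\alpha\psi}\euler^{-t\Delta}\euler^{-\alpha\psi}$, whose kernel is $\euler^{\alpha\psi(x)}p_t(x,y)\euler^{-\alpha\psi(y)}$ up to the measure factors. The key computation is that the generator of this perturbed semigroup, sandwiched as a quadratic form, satisfies
\[
\cE_{\alpha\psi}(f):=\langle f,\euler^{\alpha\psi}\Delta(\euler^{-\alpha\psi}f)\rangle \ \ge\ \tfrac12\cE(f) - c\,\alpha^2\,\Lambda(\psi)^2\,\|f\|_2^2,
\]
a Davies-type quadratic-form bound obtained by expanding the double sum defining $\cE$, using the elementary inequality $(a-b)(c-d)\le \tfrac12(a-b)^2 + \tfrac12(c-d)^2$ with $a-b$ the increment of $f$ and $c-d$ the increment of $\euler^{\pm\alpha\psi}$, and absorbing the cross terms; the constant in front of $\alpha^2\|f\|_2^2$ is governed precisely by $\|\euler^{-2\psi}\Gamma(\euler^{\psi})\|_\infty \vee \|\euler^{2\psi}\Gamma(\euler^{-\psi})\|_\infty = \Lambda(\psi)^2 \le 1$. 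Since the Nash inequality in (i) still holds with $\tfrac12\cE$ in place of $\cE$ (with $c$ doubled), the Carlen--Kusuoka--Stroock / Nash argument applied to the perturbed form yields an on-diagonal bound
\[
\bigl(P_t^{\alpha\psi}\bigr)_{t}(x,x)\ \le\ \frac{C}{t^{n/2}}\,\euler^{c\alpha^2 t},
\]
uniformly in $x$, and by the standard semigroup/duality trick (splitting $P_t = P_{t/2}P_{t/2}$ and Cauchy--Schwarz) the same bound with a possibly larger constant holds for the full off-diagonal perturbed kernel. Undoing the twist gives
\[
p_t(x,y)\ \le\ \frac{C}{t^{n/2}}\exp\bigl(c\alpha^2 t - \alpha(\psi(x)-\psi(y))\bigr),
\]
and optimizing over $\alpha$ produces $\exp\bigl(-(\psi(x)-\psi(y))^2/(4ct)\bigr)$; taking the supremum over admissible $\psi$ gives $d'_\cE(x,y)^2$ in the exponent. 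Tracking the constant $c$ carefully — it should be made arbitrarily close to $1$ by choosing the split parameter and the Nash-to-ultracontractivity passage optimally — is what produces the $(1+\epsilon)$ and the $\epsilon$-dependent prefactor $C_\epsilon$.

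\medskip

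\noindent\textbf{(ii) $\Rightarrow$ (i).} For the converse, the off-diagonal bound in (ii) trivially implies the uniform on-diagonal bound $p_t(x,x)\le C_\epsilon t^{-n/2}$ (take $x=y$, or use $d'_\cE(x,y)\ge 0$). This is equivalent to ultracontractivity $\|\euler^{-t\Delta}\|_{1\to\infty}\le C t^{-n/2}$, from which the Nash inequality follows by the classical argument of Carlen--Kusuoka--Stroock: differentiate $t\mapsto \|\euler^{-t\Delta}f\|_2^2$ for $f\in\cD(\cE)$ with $\|f\|_1\le 1$, combine the decay rate $\|\euler^{-t\Delta}f\|_2^2 \le \|\euler^{-t\Delta}f\|_1\|\euler^{-t\Delta}f\|_\infty \le C t^{-n/2}$ with $\frac{d}{dt}\|\euler^{-t\Delta}f\|_2^2 = -2\cE(\euler^{-t\Delta}f)$, and optimize in $t$. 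This direction is entirely standard and essentially contained in \cite{CarlenKS-87, Nash-58}.

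\medskip

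\noindent\textbf{Main obstacle.} The genuinely delicate point is the first direction, specifically the bookkeeping that the constant in the Gaussian exponent can be pushed to $1/(4(1+\epsilon))$ rather than merely some dimensional constant, while keeping the metric $d'_\cE$ (defined via $\Lambda(\psi)\le 1$, i.e.\ an $L^\infty$ bound on the ``carré-du-champ of the exponential'') as the object appearing in the bound — on graphs the chain rule fails, so $\Gamma(\euler^{\psi})\neq \euler^{2\psi}\Gamma(\psi)$ and one must work directly with the non-chain-rule quantity $\Lambda(\psi)$ throughout the perturbation estimate. Getting the quadratic-form inequality $\cE_{\alpha\psi}(f)\ge \tfrac12\cE(f) - \alpha^2\|f\|_2^2$ with the right constant in front of $\alpha^2$ (namely $\Lambda(\psi)^2$, not some multiple thereof) requires the sharp form of the elementary two-term splitting and is where the precise definition of $\Lambda$ as a maximum of two sup-norms is used. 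Everything else is a routine combination of the Nash/Moser iteration machinery with the Davies optimization, as carried out in \cite{CarlenKS-87, Davies-87}.
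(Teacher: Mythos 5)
Your outline is correct and is essentially the argument of the cited source \cite{CarlenKS-87} (Davies' exponential perturbation combined with Nash's iteration for (i)$\Rightarrow$(ii), and the standard ultracontractivity-to-Nash argument for the converse); the paper itself states this theorem without proof, so there is nothing different to compare against. Your own flagged caveat is the right one: the crude form bound with $\tfrac12\cE(f)$ only affects the prefactor, and to land exactly on $4(1+\epsilon)$ one must keep the coefficient of $\alpha^2\|f\|_2^2$ at $(1+\epsilon)\Lambda(\psi)^2$ through the $L^1\to L^2\to L^\infty$ splitting, as in \cite{CarlenKS-87}.
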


In the same paper, the authors obtain characterizations of heat kernel upper bounds with decay rates different for small and large times in terms of Nash inequalities involving different dimensions. 
More general versions of the Dirichlet form statement can be found in \cite{Coulhon-96,CKKW-21}. There, the functions $x\mapsto x^{1+2/n}$, and $x\mapsto x^{n/2}$ appearing in the Nash inequality and heat kernel bound, respectively, are replaced by more general functions which suitably translate into each other. 

Since the Nash inequalities are equivalent to other functional inequalities like Faber-Krahn or Sobolev inequalities, uniform heat kernel upper bounds can be characterized by these inequalities as well, cf.~\cite{BakryCLS-95}. 

A legitimate concern are sufficient geometric criteria for Nash or equivalent inequalities in order to obtain heat kernel estimates. It is known that isoperimetric-type inequalities imply any of the latter functional inequalities. Such isoperimetric inequalities for graphs with possibly unbounded geometry have been introduced in \cite{BauerKW-15}, where a Cheeger inequality has been derived in our general context in terms of intrinsic metrics. Localized Euclidean isoperimetric inequalities and their relation to Sobolev inequalities for general graphs can be found in \cite{KellerRose-22b}.

\section{Universal Gaussian terms}\label{sec:universal}
This section discusses the universal Gaussian term for the heat kernel $p$ of a given graph $b$ over the discrete measure space $(X,m)$. A priori, there is no canonical choice for a metric on a graph, and heat kernel bounds depend on the choice of a distance.  

In \cite{Davies-93}, Davies generalized Pang's heat kernel upper bound \cite{Pang-93} for the integers discussed in the introduction  to heat kernel bounds for graphs $(X,b,\deg)$ by Davies' method equipped with the combinatorial distance $d$. 
\begin{theorem}[{\cite[Theorem~10, Corollary~11]{Davies-93}}]If $b$ is a graph over $(X,\deg)$, the heat kernel satisfies
\[
p_t(x,y)\leq \frac{1}{\sqrt{\deg(x)\deg(y)}}\exp\left(-t\ \zeta\left(\frac{d(x,y)}{t}\right)\right),
\]
for all $x,y\in X$ and $t>0$. In particular, for any $\epsilon>0$ there is a $\delta>0$ such that if $d(x,y)/t<\delta$, we have
\[
p_t(x,y)\leq  \frac{1}{\sqrt{\deg(x)\deg(y)}}\exp\left(-\frac{d(x,y)^2}{(2+\epsilon)t}\right).
\]
\end{theorem}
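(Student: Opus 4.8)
\section*{Proof proposal}

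The plan is to run Davies' perturbation method. Because here $m=\deg$, we have $\Deg(x)=\deg(x)/m(x)=1$ for all $x$, so $\Delta$ is bounded, and $\delta_x\in\ell^2(X,\deg)$ with $\|\delta_x\|_2^2=\deg(x)$. From the formula $p_t(x,y)=(\deg(x)\deg(y))^{-1}\langle\delta_x,\euler^{-t\Delta}\delta_y\rangle$ and the elementary identity
\[
\langle\delta_x,\euler^{-t\Delta}\delta_y\rangle=\euler^{\psi(y)-\psi(x)}\,\langle\delta_x,Q^\psi_t\delta_y\rangle,\qquad Q^\psi_t:=M_{\euler^\psi}\,\euler^{-t\Delta}\,M_{\euler^{-\psi}},
\]
valid for every bounded real $\psi\colon X\to\RR$ (where $M_g$ is multiplication by $g$), the Cauchy--Schwarz inequality gives
\[
p_t(x,y)\le\frac{\euler^{\psi(y)-\psi(x)}}{\sqrt{\deg(x)\deg(y)}}\,\bigl\|Q^\psi_t\bigr\|_{2\to 2}.
\]
So the whole problem reduces to an $\ell^2$-bound on the conjugated semigroup plus a clever choice of $\psi$.

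Since $\psi$ and $\Delta$ are bounded, $Q^\psi_t=\euler^{-tA_\psi}$ with $A_\psi:=M_{\euler^\psi}\Delta M_{\euler^{-\psi}}$ bounded, and differentiating $\|Q^\psi_t g\|_2^2$ reduces the estimate to a lower bound on $\Re\langle A_\psi g,g\rangle=\Re\langle\Delta(\euler^{-\psi}g),\euler^{\psi}g\rangle$. Expanding this via Green's formula, the diagonal terms cancel the weights and the off-diagonal terms produce the factor $-2\cosh(\psi(x)-\psi(y))\,\Re(g(x)\overline{g(y)})$; bounding $2|\Re(g(x)\overline{g(y)})|\le |g(x)|^2+|g(y)|^2$ and using symmetry in $x,y$ yields
\[
\Re\langle A_\psi g,g\rangle\ \ge\ -\Lambda(\psi)\,\|g\|_2^2,\qquad \Lambda(\psi):=\sup_{x\in X}\frac{1}{\deg(x)}\sum_{y\in X}b(x,y)\bigl(\cosh(\psi(x)-\psi(y))-1\bigr),
\]
hence $\|Q^\psi_t\|_{2\to2}\le\euler^{t\Lambda(\psi)}$ by Gr\"onwall. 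Now fix $x,y$ and take $\psi(z)=s\,d(z,y)$ with $s\ge 0$; since $d(z,w)=1$ when $z\sim w$, we get $|\psi(z)-\psi(w)|\le s$ along edges and therefore $\Lambda(\psi)\le\cosh s-1$, while $\psi(y)-\psi(x)=-s\,d(x,y)$. This gives
\[
p_t(x,y)\le\frac{1}{\sqrt{\deg(x)\deg(y)}}\exp\!\bigl(-s\,d(x,y)+t(\cosh s-1)\bigr)\qquad\text{for all }s\ge 0,
\]
and minimizing the exponent — the minimizer is $s^\ast=\arsinh(d(x,y)/t)$, and $\cosh(\arsinh u)=\sqrt{1+u^2}$ — produces exactly $-t\,\zeta(d(x,y)/t)$, which is the claimed bound. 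The one point needing care is that $\psi(z)=s\,d(z,y)$ is unbounded on an infinite graph: I would replace it by the truncation $\psi_n(z)=s\,(d(z,y)\wedge(n/s))$, which is bounded, still $s$-Lipschitz along edges, run the argument with $\psi_n$, and let $n\to\infty$ (for $n\ge s\,d(x,y)$ the exponent no longer depends on $n$). This bookkeeping, rather than any analytic difficulty, is the main obstacle; the substantive observation is that the Legendre-type optimization in $s$ reconstructs precisely the function $\zeta$.

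For the ``in particular'' statement, note $\zeta'(u)=\arsinh u$ and $\zeta(0)=0$, so a Taylor expansion gives $\zeta(u)=\tfrac12 u^2+O(u^4)$ as $u\to0$; in particular $\zeta(u)/u^2\to\tfrac12$, so for any $\epsilon>0$ there is $\delta>0$ with $\zeta(u)\ge u^2/(2+\epsilon)$ whenever $0\le u<\delta$. Applying this with $u=d(x,y)/t<\delta$ yields $t\,\zeta(d(x,y)/t)\ge d(x,y)^2/((2+\epsilon)t)$, which combined with the first part gives the stated Gaussian bound.
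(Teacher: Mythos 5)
Your argument is correct and is precisely Davies' perturbation method, which is what the paper (which only cites \cite{Davies-93} for this statement) uses for the analogous Theorem~\ref{thm:davies1}: conjugation by $\euler^{\psi}$, an $\ell^2$-bound on the twisted semigroup via the quantity $\sup_x \deg(x)^{-1}\sum_y b(x,y)(\cosh(\psi(x)-\psi(y))-1)\leq \cosh s-1$, and the Legendre-type optimization in $s$ that reproduces $\zeta$. The truncation of $\psi(z)=s\,d(z,y)$ and the Taylor expansion $\zeta(u)=\tfrac12u^2+O(u^4)$ for the in-particular statement are both handled correctly.
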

Clearly, this theorem generalizes Theorem~\ref{thm:ckkw} in the sense that no assumptions on the regularity properties of the quadratic form are necessary. Additionally, as observed by Davies and Pang \cite{Davies-93,Pang-93}, the function $\zeta$ is essentially exact. The two instead of four in the exponential term appears because Davies' normalization differs from Pang's by a factor two.
\\

Davies' approach easily yields comparable heat kernel bounds for graphs with bounded vertex degree. In order to generalize this result to graphs with possibly unbounded vertex degree, we use the notion of intrinsic metrics. They have been used to prove diverse results, \cite{BauerKH-13, HaeselerKW-13, Folz-14b, Folz-14, BauerKW-15,Keller-15, HuangKS-20, KellerLW-21}. Originally  introduced in Sturm's fundamental work on strongly local Dirichlet forms in \cite{Sturm-94} they appear for general regular Dirichlet forms in \cite{FrankLenzWingert-14}.  
First ideas towards of intrinsic metrics on graphs can already be found in \cite{Davies-93a}, but they appear first explicitly  in \cite{Folz-11,GrigoryanHuangMasamune-12}.

A pseudo-metric $\rho\colon X\times X\to[0,\infty)$ for a graph $b$ over $(X,m)$ is called \emph{intrinsic} if 
\[
\sum_{y\in X} b(x,y)\rho^2(x,y)\leq m(x),
\]
for all  $ x\in X $. Note that this immediately yields  $\Gamma(\rho(x,\cdot))(y)\leq 1/2$ for all $x,y\in X$ by the triangle inequality. The {jump size} of an intrinsic metric $\rho$ is given by 
\begin{align*}
	S=\sup\{\rho(x,y)\colon b(x,y)>0, x,y\in X \}.
\end{align*}

\begin{example}\label{ex:pathdegreemetric}If the weighted vertex degree is bounded, then a suitable rescaling of the combinatorial distance is intrinsic. Let $b$ a graph $b$ over $(X,m)$ with a possibly unbounded Laplacian, and $S>0$. The function $\rho$ defined via
\[
\rho(x,y)=\inf_{x= x_0\sim\ldots\sim x_n=y}\sum_{k=1}^nS\wedge \sqrt{\frac{m(x_{k-1})}{\deg(x_{k-1})}}\wedge\sqrt{\frac{m(x_{k})}{\deg(x_{k})}}
\]
is always an intrinsic metric with jump size bounded by $S$ for $b$ over $(X,m)$.
\end{example}

For graphs supporting an intrinsic metric, we have the following generalization of Davies' and Pang's results. 
\begin{theorem}[{\cite[Cor.~1.1]{BauerHuaYau-17}}]\label{thm:BHY}
Let $b$ be a graph over $(X,m)$ and $\rho$ an intrinsic metric with jump size bounded by $S>0$. Then, we have
\[
p_t(x,y)\leq \frac{1}{\sqrt{m(x)m(y)}}\exp\left(-\frac{t}{S^{2}}\zeta\left(\frac{\rho(x,y)S}{t}\right)\right)
\]
for all $x,y\in X$ and $t>0$. In particular, for any $\epsilon>0$ there exists $\delta=\delta(S,\epsilon)>0$ such that if $\rho(x,y)/t<\delta$, then 
\[
p_t(x,y)\leq \frac{1}{\sqrt{m(x)m(y)}}\exp\left(-\frac{\rho(x,y)^2}{(2+\epsilon)t}\right).
\]
\end{theorem}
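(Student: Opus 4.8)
The plan is to run Davies' perturbation method, using the defining inequality of an intrinsic metric together with the bound on the jump size to control the gradient-type quantity it produces, and then to carry out one explicit one-parameter optimisation; this is the scheme of \cite{BauerHuaYau-17} and the natural generalisation of the arguments of \cite{Davies-93, Pang-93}. First I would fix a \emph{bounded} $\psi\in\cC(X)$, write $\euler^{\pm\psi}$ for the associated multiplication operators on $\ell^2(X,m)$, and study the twisted semigroup $\euler^{\psi}\euler^{-t\Delta}\euler^{-\psi}$. Expanding
\[
\bigl(\euler^{\psi(x)}g(x)-\euler^{\psi(y)}g(y)\bigr)\bigl(\euler^{-\psi(x)}g(x)-\euler^{-\psi(y)}g(y)\bigr)=g(x)^2+g(y)^2-2\cosh\bigl(\psi(x)-\psi(y)\bigr)g(x)g(y),
\]
then discarding $(g(x)-g(y))^2\ge 0$, using $2|g(x)g(y)|\le g(x)^2+g(y)^2$ and the symmetry of $b$, yields the form bound $\cE(\euler^{\psi}g,\euler^{-\psi}g)\ge-\Lambda(\psi)^2\|g\|_2^2$ with $\Lambda(\psi)^2:=\sup_{x}\frac1{m(x)}\sum_{y}b(x,y)(\cosh(\psi(x)-\psi(y))-1)$. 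Differentiating $t\mapsto\|\euler^{\psi}\euler^{-t\Delta}\euler^{-\psi}f\|_2^2$, inserting this bound and applying Gronwall's lemma gives $\|\euler^{\psi}\euler^{-t\Delta}\euler^{-\psi}\|_{2\to 2}\le\euler^{\Lambda(\psi)^2 t}$; combined with Cauchy--Schwarz, the identity $\langle\delta_x,\euler^{-t\Delta}\delta_y\rangle=\euler^{\psi(y)-\psi(x)}\langle\delta_x,\euler^{\psi}\euler^{-t\Delta}\euler^{-\psi}\delta_y\rangle$, and $p_t(x,y)=(m(x)m(y))^{-1}\langle\delta_x,\euler^{-t\Delta}\delta_y\rangle$, this produces
\[
p_t(x,y)\le\frac{1}{\sqrt{m(x)m(y)}}\,\exp\bigl(\psi(y)-\psi(x)+\Lambda(\psi)^2 t\bigr)\qquad\text{for every bounded }\psi
\]
(the choice $\psi\equiv0$ already recovers the trivial on-diagonal bound).

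Next I would fix $y$ and $\lambda>0$ and apply the last display with $\psi=\lambda\bigl(\rho(\cdot,y)\wedge n\bigr)$, sending $n\to\infty$ afterwards: the left-hand side is independent of $n$, while $\psi(y)=0$ and $\psi(x)\to\lambda\rho(x,y)<\infty$. For neighbours $x'\sim y'$ one has $|\psi(x')-\psi(y')|\le\lambda\rho(x',y')\le\lambda S$, and since $u\mapsto(\cosh u-1)/u^2$ is nondecreasing on $[0,\infty)$ this gives $\cosh(\psi(x')-\psi(y'))-1\le S^{-2}(\cosh(\lambda S)-1)\,\rho(x',y')^2$; summing against $b(x',\cdot)$ and invoking the intrinsic inequality $\sum_{y'}b(x',y')\rho(x',y')^2\le m(x')$ yields $\Lambda(\psi)^2\le S^{-2}(\cosh(\lambda S)-1)$. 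Hence, for all $\lambda\ge 0$,
\[
p_t(x,y)\le\frac{1}{\sqrt{m(x)m(y)}}\,\exp\Bigl(-\lambda\rho(x,y)+\frac{t\,(\cosh(\lambda S)-1)}{S^2}\Bigr).
\]

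Finally I would optimise over $\lambda$. The exponent is strictly convex in $\lambda$ and its minimiser solves $\sinh(\lambda S)=\rho(x,y)S/t$; substituting $\lambda=S^{-1}\arsinh(\rho(x,y)S/t)$ and using $\cosh\arsinh u=\sqrt{1+u^2}$ collapses the exponent to exactly $-\tfrac{t}{S^2}\zeta(\rho(x,y)S/t)$ with $\zeta$ as in the introduction, which is the first assertion. For the ``in particular'' part one uses $\zeta(u)/u^2\to\tfrac12$ as $u\to0$: given $\eps>0$ pick $\delta_0=\delta_0(\eps)$ with $\zeta(u)\ge u^2/(2+\eps)$ on $[0,\delta_0]$ and set $\delta:=\delta_0/S$; then $\rho(x,y)/t<\delta$ forces $u:=\rho(x,y)S/t<\delta_0$ and hence $\tfrac{t}{S^2}\zeta(u)\ge\rho(x,y)^2/((2+\eps)t)$.

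The hard part is not the geometry but the functional-analytic bookkeeping in the first step, which must be done for a possibly unbounded Laplacian and an arbitrarily large bounded $\psi$: one has to justify that $t\mapsto\|\euler^{\psi}\euler^{-t\Delta}\euler^{-\psi}f\|_2^2$ is differentiable with the expected derivative, that multiplication by $\euler^{\pm\psi}$ (and by its truncations) maps $\cD(\cE)$ into itself so that the Green-type identity behind $\cE(\euler^{\psi}g,\euler^{-\psi}g)=\langle\euler^{\psi}g,\Delta(\euler^{-\psi}g)\rangle$ is legitimate, and that the limit $n\to\infty$ survives in the final estimate (this last point is immediate once the bound holds for every bounded $\psi$). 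The energy estimates needed for this are themselves consequences of the intrinsic property --- for instance $\sum_{y}b(x,y)(\euler^{\psi(x)}-\euler^{\psi(y)})^2\le\euler^{2\|\psi\|_\infty}\lambda^2 m(x)$ when $\psi=\lambda(\rho(\cdot,y)\wedge n)$ --- and for locally finite graphs the whole scheme is carried out as in \cite{Davies-93, BauerHuaYau-17}.
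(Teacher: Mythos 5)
Your argument is correct and is exactly the route the paper defers to: it reproduces the Davies-type perturbation scheme of \cite{BauerHuaYau-17} (and \cite{Davies-93,Pang-93}), with the intrinsic-metric inequality plus the jump-size bound replacing regularity to give $\Lambda(\psi)^2\le S^{-2}(\cosh(\lambda S)-1)$, followed by the $\arsinh$ optimisation producing $\zeta$ and the elementary expansion $\zeta(u)\sim u^2/2$ for the in-particular statement. This matches the paper, which cites \cite[Cor.~1.1]{BauerHuaYau-17} for Theorem~\ref{thm:BHY} and carries out the same $\cosh$/Legendre computation in its proof of Theorem~\ref{thm:davies1}.
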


The proof of Theorem~\ref{thm:BHY} is very close to the proof of Theorem~\ref{thm:davies1}, where the  intrinsic property of the metric replaces the regularity assumption on the graph.

\section{From on-diagonal to off-diagonal bounds}

As discussed in Section~\ref{sec:uniform}, Davies' original approach to off-diagonal upper bounds for heat kernels includes the derivation of on-diagonal heat kernel bounds for sandwiched operators and an optimization procedure. A characterization of on-diagonal upper heat kernel bounds is used explicitly. 

A more direct approach, based on work on manifolds by Grigor'yan, is nowadays called Grigor'yan's two-point method \cite{Grigoryan-94,Grigoryan-99}\footnote{Grigor'yan refers to Ushakov \cite{Ushakov-80} for the origins of the method.}. Off-diagonal upper bounds in two points are a direct consequence of on-diagonal upper bounds in only these two points, if the latter are given in terms of sufficiently regular functions. This follows since the heat kernel can be estimated by products of the Gaussian term and weighted $L^2$-norms of the heat kernel involving a rooted Gaussian. Such weighted $L^2$-norms are then controlled iteratively by integral maximum principles and the assumed on-diagonal bound.
\\

Since our focus lies on heat kernel bounds for unbounded graph Laplacians, we quote here the most recent corresponding result, which is phrased in terms of regular functions. 
In the present context, a function $f\colon (0,\infty)\to(0,\infty)$ is called $(A,\gamma)$-regular for $A,\gamma\geq 1$,  if 
\[
\frac{f(\gamma s)}{f(s)}\leq A\frac{f(\gamma t)}{f(t)}
\]
for all $0<s< t$. An adaption of Grigor'yan's two-point method to graphs is as follows.

\begin{theorem}[{\cite[Theorem~1.1]{Chen-17}, cf.~\cite[Theorem~1.1]{Folz-11}}]\label{thm:ChenFolz}
Let $b$ be a graph over $(X,m)$ and $\rho$ an intrinsic metric with jump size one. Assume there are $x,y\in X$ and $f_z\colon (0,\infty)\to(0,\infty)$ being $(A,\gamma)$-regular for $A,\gamma\geq 1$, $z\in\{x,y\}$, such that there exists a constant $\delta\geq 0$ with
\[
\sup_{z\in \{x,y\},\ t>0} \frac{f_z(t)}{\euler^{\delta t}}<\infty.
\]
Assume 
\[
p_t(z,z)\leq \frac{1}{m(z)f_z(t)}
\]
for all $t>0$, $z\in\{x,y\}$. 
Then there are $c_1,c_2,c_3>0$ such that 
\[
p_t(x,y)\leq \frac{c_1}{\sqrt{m(x)m(y)}\sqrt{f_x(c_2t)f_y(c_2t)}}\exp\left(-c_3\frac{\rho(x,y)^2}{t}\right)
\]
for all $t\geq \rho(x,y)$.
\end{theorem}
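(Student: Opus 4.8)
## Proof Plan

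The plan is to follow Grigor'yan's two-point method, adapting the manifold argument to the graph setting where the jump size of $\rho$ being one is the key structural input. The heart of the argument is a weighted $L^2$ integral maximum principle: for a fixed vertex $w$ and parameter $\xi > 0$, one tracks the quantity
\[
E_w(t) = \sum_{z \in X} m(z)\, p_t(w,z)^2 \, \euler^{\frac{\rho(w,z)^2}{D(t-t_0)}}
\]
or a suitable variant with a time-dependent exponential weight of the form $\euler^{\rho(w,z)^2/(D(2t_0-t))}$ on the relevant time interval. First I would establish that such a weighted energy is non-increasing (or grows in a controlled way) along the heat flow. The computation differentiates $E_w(t)$ in $t$, uses the heat equation $\partial_t p_t = -\Delta p_t$, integrates by parts via Green's formula, and produces a competition between the Dirichlet-form term and the error coming from differentiating the exponential weight. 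On graphs the chain rule fails, so one must instead use the elementary inequality controlling $(\euler^{a} - \euler^{b})(a'-b')$-type expressions; this is precisely where the intrinsic condition $\sum_y b(x,y)\rho(x,y)^2 \le m(x)$ and the jump size bound $S = 1$ enter, to absorb the weight-derivative error into the energy term. The jump size being exactly one means $\euler^{\rho(w,z)^2/(\cdots)}$ at neighboring vertices differ by a bounded multiplicative factor, which is what makes the discrete integration by parts close.

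Second, with the integral maximum principle in hand, I would run the standard iteration/bootstrap. Starting from the trivial bound $E_w(t_0) \le p_{t_0}(w,w)/m(w) \cdot (\text{something})$—more precisely, one uses $\sum_z m(z) p_{t_0}(w,z)^2 = p_{2t_0}(w,w)$ by the semigroup property and symmetry—the monotonicity of $E_w$ transports the on-diagonal bound $p_t(w,w) \le 1/(m(w)f_w(t))$ forward in time with the Gaussian weight attached. Applying this at $w = x$ and $w = y$ separately on suitable sub-intervals and then combining via Cauchy–Schwarz,
\[
p_t(x,y) = \sum_z m(z)\, p_{t/2}(x,z)\, p_{t/2}(z,y) \le \Big(\sum_z m(z) p_{t/2}(x,z)^2 w_z\Big)^{1/2}\Big(\sum_z m(z) p_{t/2}(z,y)^2 w_z^{-1}\Big)^{1/2},
\]
where the weights $w_z$ are chosen to be rooted Gaussians at $x$ and $y$ whose product reconstructs $\euler^{-c_3 \rho(x,y)^2/t}$ after invoking $\rho(x,z) + \rho(z,y) \ge \rho(x,y)$. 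This yields the product form $\sqrt{f_x(c_2 t) f_y(c_2 t)}$ in the denominator.

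The role of the $(A,\gamma)$-regularity hypothesis is to make the time-shifts harmless: the iteration naturally replaces $f_z(t)$ by $f_z(c t)$ for some constant $c < 1$ (or $> 1$) picked up at each doubling step, and $(A,\gamma)$-regularity guarantees $f_z(ct) \ge A^{-k} f_z(t)$-type comparisons so that the final constant $c_1$ is finite and independent of $t$. The exponential-growth bound $\sup f_z(t)/\euler^{\delta t} < \infty$ is needed to guarantee convergence of the sums $E_w(t)$ in the first place (so that differentiation under the sum is legitimate) and to control the behavior as $t \to \infty$; for unbounded Laplacians one does not have a priori $\ell^2$-summability with Gaussian weights for free, so this is genuinely used.

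I expect the main obstacle to be the discrete integration by parts in the integral maximum principle: unlike the manifold case where $|\nabla \euler^{\psi}|^2 = |\nabla\psi|^2 \euler^{2\psi}$ exactly, on a graph one only has the inequality $\Gamma(\euler^\psi)(x) \le (\text{const}) \euler^{2\psi(x)} \Gamma(\psi)(x)$ with a constant depending on $\sup_{x \sim y}|\psi(x)-\psi(y)|$, which blows up unless the jump size is controlled—hence the hypothesis $S=1$. Getting the monotonicity with a usable constant $D$, and tracking how $D$, $A$, $\gamma$, $\delta$, and $S=1$ combine into the final $c_1, c_2, c_3$, is the delicate bookkeeping; the restriction to $t \ge \rho(x,y)$ in the conclusion is exactly the regime where these discrete corrections stay bounded, mirroring the fact (noted in the introduction) that the discrete short-time behavior departs from the Gaussian.
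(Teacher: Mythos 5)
Your plan is essentially the same argument as the one the paper relies on: the paper does not prove Theorem~\ref{thm:ChenFolz} itself but cites \cite[Theorem~1.1]{Chen-17} and \cite[Theorem~1.1]{Folz-11}, and describes the method in exactly the terms you use --- a weighted $L^2$ integral maximum principle with a rooted Gaussian weight (where the intrinsic condition and the jump-size bound absorb the discrete chain-rule error), an iteration driven by the $(A,\gamma)$-regularity of $f_z$ and the exponential growth bound, and a final Chapman--Kolmogorov/Cauchy--Schwarz splitting at time $t/2$. Your identification of where each hypothesis enters and why the range $t\geq\rho(x,y)$ is the nontrivial regime matches the paper's discussion, so the proposal is a faithful outline of the cited proof.
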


The advantage of theorems in the above spirit is that off-diagonal bounds follow from on-diagonal bounds which depend only on the spatial location of the heat kernel. In contrast to the earlier version  \cite{Folz-11}, there is no restriction on the vertex measure.

Note that the nontrivial part of Theorem~\ref{thm:ChenFolz} lies in the range of $t$: assuming $t\geq \rho(x,y)^2$ instead would not require any assumption on $f$, and the result would just follow from the Chapman-Kolmogorov identity for the heat kernel.

In the next section we will apply Theorem~\ref{thm:ChenFolz} with the path degree metric to obtain optimal off-diagonal bounds for elliptic operators on integer lattices.

\section{Off-diagonal bounds on combinatorial graphs}
\label{sec:combinatorial}
As explained in the introduction, a graph can be unbounded in several ways: either the Laplacian is bounded and the combinatorial geometry is unbounded, or the Laplacian is unbounded the combinatorial geometry is bounded, or both are unbounded. In this section, we will focus on the case where the combinatorial geometry is tame, but the Laplacian is allowed to be unbounded. 

Let $b$ be a graph over $(X,m)$ and assume that there are $n\geq 2$ and $C>0$ such that the underlying combinatorial graph satisfies the following regularity conditions:
\begin{itemize}
\item $\mathrm{D}:=\sup_{x\in X}\#\{y\in X\colon y\sim x\}<\infty$,
\item for every $x\in X$ there is $r_x\geq 0$ such that $C^{-1}r^n\leq \sharp B(x,r)\leq Cr^n$ for all $r\geq r_x$,
\item  for every $x\in X$ there is $r_x\geq 0$ such that $$\left(\sum_{y\in B(x,r)}\|u\|^{\frac{n}{n-1}}\right)^{\frac{n-1}{n}}\leq C\sum_{(y,z)\in B_x(r)_E}|u(y)-u(z)|$$ for all $r\geq r_x$, where we denote by $B_E:=(B\times B)\cap E$ the set of edges lying inside the set $B\subset X$, where we recall the set of edges $E=\{(x,y)\in X\times X\colon b(x,y)>0\}$.
\end{itemize}
Above assumptions are satisfied, e.g., if the combinatorial graph is $\ZZ^n$, or if the combinatorial vertex degree is bounded and the isoperimetric constant is bounded. For elliptic operators on combinatorial graphs we use 
the so-called chemical distance $d_{\mathrm{ch}}$ defined by 
\[
d_{\mathrm{ch}}(x,y)=\inf_{x=x_0\sim\dots\sim x_n=y}\sum_{i=1}^n\sqrt{\frac{m(x_{i-1})\wedge m(x_i)}{b(x_{i-1},x_i)}}
\]
for any $x,y\in X$. The chemical distance is almost intrinsic for graphs with bounded vertex degree, what can be seen by comparing it with the path degree metric. 

Finally, denote for $p\in[1,\infty)$, $f\in\cC(X)$, $B\subset X$ or $B\subset X\times X$, and $0\leq \phi\in\cC(X)$,
\[
\|f\|_{p,B,\phi}=\left(\frac{1}{\# B}\sum_{x\in B}|f(x)|^p\phi(x)\right)^{\frac{1}{p}},\qquad \|f\|_{\infty,B}=\max_B|f|,
\]
and $ \|f\|_{p,B}=\|f\|_{p,B,1}$ for all $p\in[1,\infty]$. The following result can be seen as the graph version of a classical result of Trudinger~\cite{Trudinger-71}.
\begin{theorem}[{cf.~\cite[Theorem~3.2]{ADS-19}}]\label{thm:ADS}
Let $b$ a graph over $(X,m)$ satisfying the regularity properties above, $n\geq 2$, and $p,q,r\in(1,\infty]$ such that
\[
\frac{1}{r}+\frac{1}{p}\frac{r-1}{r}+\frac{1}{q}<\frac{2}{n}.
\]
Further, assume 
\begin{multline*}
\sup_{x\in X}\ \limsup_{s\to\infty}\  \|1\vee \deg{m}^{-1}\|_{p,B(x,s),m}\|1\vee \sum_{y\in X}b(\cdot, y)^{-1}\|_{q,B(x,s)}\\
\cdot\|1\vee m\|_{r,B(x,s)}\|1\vee {m}^{-1}\|_{q,B(x,s)}<\infty.
\end{multline*}
Then, there exist constants $C,\gamma>0$ sucht that for any $x\in X$ there exists $r_x\geq 0$ such that for any $t\geq r_x^2$ we have 
\[
p_t(x,y)\leq C\ \frac{\left(1+\frac{d_{\mathrm{ch}}(x,y)}{t}\right)^\gamma}{t^{\frac{n}{2}}}\exp\left(-2\mathrm{D} t \ \zeta\left(\frac{d_{\mathrm{ch}}(x,y)}{2\mathrm{D} t} \right)\right).
\]
\end{theorem}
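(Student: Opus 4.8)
The plan is to combine a classical elliptic Moser-type iteration argument on the combinatorial graph with the abstract two-point scheme of Theorem~\ref{thm:ChenFolz}, via the chemical distance and the observation that $d_{\mathrm{ch}}$ differs from an honest intrinsic metric only by a bounded factor. First I would reduce the Gaussian term: since the combinatorial vertex degree is bounded by $\mathrm{D}$ and the combinatorial graph satisfies uniform volume growth of order $n$, one checks that the path-degree metric of Example~\ref{ex:pathdegreemetric} (with a suitable jump size $S$) is comparable to $d_{\mathrm{ch}}$; hence the exponential factor $\exp(-2\mathrm{D}t\,\zeta(d_{\mathrm{ch}}/2\mathrm{D}t))$ can be produced from the universal estimate of Theorem~\ref{thm:BHY} (rescaled so that the jump size is one), up to adjusting constants. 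The polynomial pre-factor and the precise $\zeta$-normalization come from tracking how the weighted degree $\mathrm{Deg}$ enters the speed of the walk; this is where the factor $2\mathrm{D}$ in front of $\zeta$ originates, the worst-case comparison of the elliptic operator with the unweighted Laplacian.

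The core analytic input is the on-diagonal bound $p_t(z,z)\le C\,(m(z)\,t^{n/2})^{-1}$ for all large $t$, uniformly in $z$, under the displayed integrability hypothesis on the mean $L^p$-norms of $\deg/m$, $\sum_y b(\cdot,y)^{-1}$, $m$ and $m^{-1}$. I would obtain this by the Trudinger/ADS route: on a fixed ball $B(x,s)$ with $s$ large, run a De Giorgi--Nash--Moser iteration for the discrete parabolic equation $\partial_t u=-\Delta u$, using the local Sobolev inequality assumed as the third regularity condition together with the volume lower bound. The mean-integrability condition enters exactly when one converts the Dirichlet form $\cE(u)=\tfrac12\sum b(x,y)(u(x)-u(y))^2$ and the $\ell^2(m)$-norms into the unweighted $\ell^{n/(n-1)}$-norm appearing in the Sobolev inequality: Hölder's inequality with the exponents $p,q,r$ produces the four factors in the hypothesis, and the condition $\tfrac1r+\tfrac1p\tfrac{r-1}{r}+\tfrac1q<\tfrac2n$ is precisely what guarantees the iteration exponents stay in the convergent regime. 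The outcome is a Nash-type on-diagonal decay $t^{-n/2}$ valid for $t\ge r_x^2$, with the spatial weight $1/m(z)$ separated out.

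Once the on-diagonal bounds are in hand I would feed them into Theorem~\ref{thm:ChenFolz} with $f_z(t)=c\,t^{n/2}$ (for $t$ past $r_x^2$; for small $t$ one can just use boundedness of $t\mapsto p_t(z,z)m(z)$, which costs only a constant), noting that $t\mapsto t^{n/2}$ is trivially $(A,\gamma)$-regular and that $f_z(t)\le \euler^{\delta t}$ for suitable $\delta$, so all hypotheses of that theorem are met. This yields $p_t(x,y)\le c_1(m(x)m(y))^{-1/2}(f_x(c_2t)f_y(c_2t))^{-1/2}\exp(-c_3 d_{\mathrm{ch}}(x,y)^2/t)$ for $t\ge d_{\mathrm{ch}}(x,y)$, and then I would upgrade the crude Gaussian $\exp(-c_3\rho^2/t)$ to the sharp $\zeta$-form by interpolating with the universal bound of Theorem~\ref{thm:BHY}: for $t\le d_{\mathrm{ch}}(x,y)$ the $\zeta$-bound is not dominated by $\rho^2/t$, but there Theorem~\ref{thm:BHY} applied to the rescaled intrinsic metric already gives the desired $\exp(-2\mathrm{D}t\,\zeta(d_{\mathrm{ch}}/2\mathrm{D}t))$, and a Chapman--Kolmogorov splitting at an intermediate time glues the two regimes while only multiplying the polynomial pre-factor by a power of $(1+d_{\mathrm{ch}}(x,y)/t)$, which is the $\gamma$-exponent in the statement.

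The main obstacle is the on-diagonal Moser iteration under only \emph{mean} (not uniform) control of the weights: unlike the uniformly elliptic case, the Sobolev constant and the coefficient bounds fluctuate from vertex to vertex, so one must run the iteration on annuli whose radii grow with the iteration step and absorb the fluctuating factors through the $\limsup_{s\to\infty}$ of the displayed product, which forces the estimate to hold only for $t\ge r_x^2$ and only after passing to spatial averages; controlling the accumulation of the Hölder losses across infinitely many iteration steps so that the final exponent is still $n/2$ is the delicate bookkeeping, and it is exactly the point where the hypothesis's strict inequality is consumed. A secondary technical point is verifying the comparison $d_{\mathrm{ch}}\asymp\rho_{\text{path degree}}$ carefully enough that the constant $2\mathrm{D}$ and the argument $d_{\mathrm{ch}}/2\mathrm{D}t$ of $\zeta$ come out as stated rather than with an uncontrolled constant.
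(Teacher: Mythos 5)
Your on-diagonal part is in the right spirit, but the way you produce the off-diagonal bound diverges from what can actually work, and the paper does something different. The proof here is essentially a citation: one follows the Moser-iteration proof of \cite[Theorem~3.2]{ADS-19} applied to the \emph{Davies-perturbed} equation (i.e., to $v=\euler^{-\lambda\psi}u$ with $\psi$ built from $d_{\mathrm{ch}}$), which yields a parabolic maximal inequality carrying a factor $\euler^{tF(\lambda)}$ with $F$ a $\cosh$-type quantity; the statement's exponential $\exp\bigl(-2\mathrm{D}t\,\zeta\bigl(\tfrac{d_{\mathrm{ch}}(x,y)}{2\mathrm{D}t}\bigr)\bigr)$ is exactly the Legendre transform of $F$ when one refrains from estimating $F$ crudely by a quadratic, and the prefactor $(1+d_{\mathrm{ch}}(x,y)/t)^\gamma$ is the unestimated polynomial correction from \cite[Corollary~3.4]{ADS-19}. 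In other words, the Gaussian weight is present inside the iteration from the start; it is not grafted on afterwards.

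The genuine gap in your proposal is the final ``upgrade'' step. Theorem~\ref{thm:ChenFolz} only delivers $\exp(-c_3\rho(x,y)^2/t)$ with an uncontrolled $c_3$, while Theorem~\ref{thm:BHY} delivers the sharp exponential but with prefactor $1/\sqrt{m(x)m(y)}$ and no $t^{-n/2}$ decay. In the intermediate regime $d_{\mathrm{ch}}(x,y)\leq t\leq d_{\mathrm{ch}}(x,y)^2$ the target exponent behaves like $d_{\mathrm{ch}}(x,y)^2/(4\mathrm{D}t)$, which is strictly stronger than $c_3 d_{\mathrm{ch}}(x,y)^2/t$ whenever $c_3<1/(4\mathrm{D})$, so the two-point bound does not imply the claim there; and no Chapman--Kolmogorov splitting or pointwise interpolation $p_t\leq p_t^{\theta}p_t^{1-\theta}$ of the two available bounds produces simultaneously the $t^{-n/2}$ prefactor and the sharp constant $2\mathrm{D}$ in front of $\zeta$ (nor can $1/\sqrt{m(x)m(y)}$ be dominated by $t^{-n/2}(1+d_{\mathrm{ch}}/t)^\gamma$ for general $m$). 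Your attribution of the factor $2\mathrm{D}$ to a metric comparison $d_{\mathrm{ch}}\asymp\rho_{\text{path degree}}$ is likewise off: it arises from bounding the $\cosh$-sum in $F$ using the number of neighbours $\mathrm{D}$ inside the Davies optimization, not from rescaling an intrinsic metric.
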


\begin{proof}
Follow the complete proof of \cite[Theorem~3.2, p.~17]{ADS-19} without estimating the function $F$ in the notation of the latter article. Then, proceed as in the proof of \cite[Corollary~3.4]{ADS-19} without estimating the polynomial correction term.
\end{proof}

Theorem~\ref{thm:ADS} is valid for all graph Laplacians provided the underlying combinatorial lattice is regular enough. However, in the special case where the combinatorial graph is just the integer lattice,  the integrability assumption can be improved. Moreover, in this case we obtain a Gaussian in the spirit of Theorem~\ref{thm:ChenFolz}.
\begin{theorem}[{\cite[Prop.~2]{Bella-22}}]\label{thm:Bella}Let $n\geq 2$, $p\in(1,\infty)$, $q\in(n/2,\infty)$, $$\frac1p+\frac1q<\frac2{n-1},$$ and $b$ a graph over $(\ZZ^n,1)$ satisfying
\[
\sup_{x\in X}\limsup_{r\to\infty} \|b\|_{p,B_x(r)_E}<\infty,\qquad \sup_{x\in X}\limsup_{r\to\infty} \|b^{-1}\|_{q,B_x(r)_E}<\infty.
\]
Then, there exists a constant $c>0$ such that 
\[
p_t(x,y)\ \leq\  \frac{c}{t^{\frac{n}{2}}}\ \exp\left(-c\frac{\rho(x,y)^2}{t}\right),
\]
for all $x,y\in\ZZ^n$ and $t\geq \rho(x,y)$, where $\rho$ denotes the intrinsic path metric with jump size one, cf.~Example~\ref{ex:pathdegreemetric}.
\end{theorem}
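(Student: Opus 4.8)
The plan is to combine the abstract on-diagonal-to-off-diagonal machinery of Theorem~\ref{thm:ChenFolz} with an on-diagonal upper bound of the correct polynomial order $t^{-n/2}$, which in turn should follow from a Nash (or Faber--Krahn) inequality obtained from the quantitative integrability assumptions on $b$ and $b^{-1}$ on large balls of $\ZZ^n$. Concretely, I would proceed in the following steps.

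\textbf{Step 1: A uniform Nash inequality from the integrability assumptions.} The classical combinatorial Sobolev inequality on $\ZZ^n$ reads $\|u\|_{\frac{n}{n-1}}^2\lesssim \big(\sum_{(x,y)\in E}|u(x)-u(y)|\big)^2$ for $u$ of bounded support, equivalently (after squaring and using the $\ell^1$--$\ell^2$ relation on the edge set) one gets $\|u\|_{\frac{2n}{n-1}}^2\lesssim \sum_{(x,y)\in E}|u(x)-u(y)|^2$. To convert this into a bound in terms of the energy form $\cE(u)=\tfrac12\sum b(x,y)(u(x)-u(y))^2$ one inserts $b^{1/2}b^{-1/2}$ and applies Hölder with exponent $q$ on the factor $b^{-1}$; this is exactly where the hypothesis $\sup_x\limsup_r\|b^{-1}\|_{q,B_x(r)_E}<\infty$ enters, while the first hypothesis on $\|b\|_{p,B_x(r)_E}$ is used to control the Dirichlet form of cutoff functions and to upgrade the local statement on large balls to a genuine global inequality. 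The condition $\tfrac1p+\tfrac1q<\tfrac2{n-1}$ is precisely the scaling threshold making the resulting functional inequality a Nash inequality of dimension $n$, i.e.\ $\|u\|_2^{2+4/n}\lesssim \cE(u)\|u\|_1^{4/n}$. Because the integrability hypotheses only hold for balls of radius $\geq r_x$, the inequality produced is valid only for functions supported outside some finite set, or equivalently holds up to an additive error that vanishes at large scales; this is what forces the restriction $t\geq r_x^2$ implicitly and the ``$t\geq\rho(x,y)$'' regime in the final bound.

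\textbf{Step 2: On-diagonal bound.} By the Carlen--Kusuoka--Stroock equivalence (Theorem~\ref{thm:ckkw}, direction (i)$\Rightarrow$(ii)), or directly by Nash's iteration argument, the Nash inequality of dimension $n$ yields $p_t(x,x)\leq C\,t^{-n/2}$ for all $t>0$ (for $t$ large; for small $t$ one uses that on $(\ZZ^n,1)$ the heat kernel is bounded by the trivial estimate $p_t(x,x)\le 1/m(x)=1$ and $t^{-n/2}\gtrsim 1$ for $t\le 1$). Thus the functions $f_x(t)=f_y(t)=c\,t^{n/2}$ are admissible, and they are $(A,\gamma)$-regular with $A=1$ for the pure power, and they satisfy the growth bound $\sup_{t>0}f_z(t)\euler^{-\delta t}<\infty$ for any $\delta>0$.

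\textbf{Step 3: Apply Grigor'yan's two-point method.} On $(\ZZ^n,1)$ the path metric of Example~\ref{ex:pathdegreemetric} with jump size one is an honest intrinsic metric with jump size one, so the hypotheses of Theorem~\ref{thm:ChenFolz} are met with $f_x=f_y=c\,t^{n/2}$. The conclusion gives directly
\[
p_t(x,y)\leq \frac{c_1}{\sqrt{m(x)m(y)}\,(c_2 t)^{n/2}}\exp\left(-c_3\frac{\rho(x,y)^2}{t}\right)
\]
for all $t\geq\rho(x,y)$, and since $m\equiv 1$ this is exactly the claimed estimate after absorbing constants into $c$.

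\textbf{Main obstacle.} The genuinely delicate point is Step~1: deriving a \emph{global} Nash inequality on all of $\ell^2(\ZZ^n,1)$ (or on the relevant function class) from integrability conditions that are only asymptotic in the radius of balls. One must run a covering/patching argument over the ball structure of $\ZZ^n$, control the boundary terms produced by localization (here the second hypothesis on $\|b\|_{p}$ is essential, to bound $\cE(\eta u)$ for a cutoff $\eta$), and verify that the sharp exponent relation $\tfrac1p+\tfrac1q<\tfrac2{n-1}$ leaves exactly enough room after the double Hölder application (once for $b^{-1/2}$ against the $\ell^{2n/(n-1)}$ Sobolev gain, once to reabsorb the weight) for the Nash exponent $1+2/n$ to come out. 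This scaling bookkeeping, together with the need to track that the error is subcritical at large scales so that the on-diagonal bound holds for $t\gtrsim r_x^2$, is where essentially all the work of \cite[Prop.~2]{Bella-22} is concentrated; the rest is a black-box invocation of Theorems~\ref{thm:ckkw} and~\ref{thm:ChenFolz}.
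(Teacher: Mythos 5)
Your Steps 2--3 are exactly the paper's proof: the paper quotes the on-diagonal bound $p_t(x,y)\leq c\,t^{-n/2}$ for $t\geq 1$ directly from \cite[Prop.~2]{Bella-22}, extends it to all $t>0$ via the trivial bound $p_t(x,y)\leq 1$, and then invokes Theorem~\ref{thm:ChenFolz} with the jump-size-one intrinsic path metric, noting that monomials are regular. Your Step 1 merely sketches the interior of the cited result (the Nash/Sobolev derivation from the integrability hypotheses), which the paper treats as a black box, so the approach is essentially identical.
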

\begin{proof}
First, by {\cite[Prop.~2]{Bella-22}}, we obtain
\[
p_t(x,y)\leq c \ t^{-\frac{n}{2}},
\]
for all $x,y\in\ZZ^n$ and $t\geq 1$. Since we trivially have $p_t(x,y)\leq 1$ for all $t\geq 0$, the above estimate on the heat kernel holds for all $t>0$ and $x,y\in\ZZ^n$  if we replace  $c$ by $1\vee c$. Hence, since monomials on the positive axis are regular, Theorem~\ref{thm:ChenFolz} involving $\rho_1$ yields the claim.
\end{proof}

As explained in \cite{Bella-22}, in contrast to Theorem~\ref{thm:ADS}, the integrability assumptions in the above theorem appear to be sharp in a certain sense. On the other hand, the Gaussian term is again not the universal Gaussian. It would be interesting if the mentioned theorems can be combined in order to get sharp integrability assumptions on regular combinatorial lattices and a heat kernel bound with the universal Gaussian term.

\section{Off-diagonal bounds for anti-trees}

Section~\ref{sec:combinatorial} describes examples of graphs having bounded combinatorial geometry but possibly unbounded Laplace operator. In this section, we discuss upper  heat kernel bounds for anti-trees, which constitute an important class of examples where the underlying graph has both unbounded combinatorial geometry and unbounded Laplacian.
Their prominent role is based on the delivery of examples showing similarities and disparities between graphs and manifolds.
Specifically, equipped with the combinatorial graph distance, they include stochastically incomplete examples of polynomial growth \cite{Wojciechowski11}  and positive spectral gap \cite{KellerLW-13}. This disparity was later resolved by the use of intrinsic metrics. 

	\begin{figure}[ht]
  \begin{minipage}[b]{\linewidth}
  \centering
\begin{tikzpicture}[scale=.6, yscale=.45]
  dot/.style={fill=blue,circle,minimum size=2pt}]
\tikzstyle{every node}=[draw, shape=circle, fill=black, inner sep=.5pt]

 \node at (0, 0)   (a) {};

    \node at (3, 2)   (b1) {};
    \node at (3, 0)  (b2)     {};
    \node at (3, -2)  (b3)     {};

\node at (6, 3)   (c1) {};
    \node at (6,2)  (c2)     {};
    \node at (6, 1)  (c3)     {};
    \node at (6, 0)   (c4) {};
    \node at (6, -1)  (c5)     {};
    \node at (6, -2)  (c6)     {};
    \node at (6,-3) (c7) {};

\node at (11, 6)   (d1) {};
    \node at (11,5)  (d2)     {};
    \node at (11, 4)  (d3)     {};
    \node at (11, 3)   (d4) {};
    \node at (11, 2)  (d5)     {};
    \node at (11, 1)  (d6)     {};
    \node at (11,0) (d7) {};
    \node at (11, -1)   (d8) {};
    \node at (11,-2)  (d9)     {};
    \node at (11, -3)  (d10)     {};
    \node at (11, -4)   (d11) {};
    \node at (11, -5)  (d12)     {};
    \node at (11, -6)  (d13)     {};

\node at (19, 9)   (e1) {};
\node at (19, 8)   (e2) {};
\node at (19, 7)   (e3) {};
\node at (19, 6)   (e4) {};
    \node at (19,5)  (e5)     {};
    \node at (19, 4)  (e6)     {};
    \node at (19, 3)   (e7) {};
    \node at (19, 2)  (e8)     {};
    \node at (19, 1)  (e9)     {};
    \node at (19,0) (e10) {};
    \node at (19, -1)   (e11) {};
    \node at (19,-2)  (e12)     {};
    \node at (19, -3)  (e13)     {};
    \node at (19, -4)   (e14) {};
    \node at (19, -5)  (e15)     {};
    \node at (19, -6)  (e16)     {};
    \node at (19, -7)   (e17) {};
\node at (19, -8)   (e18) {};
\node at (19, -9)   (e19) {};

\node at (21,0) (f15) {};
\node at (21.4,0) (f16) {};
\node at (21.8,0) (f17) {};


    \draw (a) -- (b1)   {};
 
   \draw(a) -- (b2)   {};
 
\draw (a)-- (b3) {};

\draw (b1) -- (c1) {};

\draw (b1) -- (c2) {};

\draw (b1) -- (c3) {};

\draw (b1) -- (c4) {};

\draw (b1) -- (c5) {};

\draw (b1) -- (c6) {};

\draw (b1) -- (c7) {};

\draw (b2) -- (c1) {};
\draw (b2) -- (c2) {};
\draw (b2) -- (c3) {};
\draw (b2) -- (c4) {};
\draw (b2) -- (c5) {};
\draw (b2) -- (c6) {};
\draw (b2) -- (c7) {};

\draw (b3) -- (c1) {};
\draw (b3) -- (c2) {};
\draw (b3) -- (c3) {};
\draw (b3) -- (c4) {};
\draw (b3) -- (c5) {};
\draw (b3) -- (c6) {};
\draw (b3) -- (c7) {};

\draw (c1) -- (d1) {};
\draw (c1) -- (d2) {};
\draw (c1) -- (d3) {};
\draw (c1) -- (d4) {};
\draw (c1) -- (d5) {};
\draw (c1) -- (d6) {};
\draw (c1) -- (d7) {};
\draw (c1) -- (d8) {};
\draw (c1) -- (d9) {};
\draw (c1) -- (d10) {};
\draw (c1) -- (d11) {};
\draw (c1) -- (d12) {};
\draw (c1) -- (d13) {};

\draw (c2) -- (d1) {};
\draw (c2) -- (d2) {};
\draw (c2) -- (d3) {};
\draw (c2) -- (d4) {};
\draw (c2) -- (d5) {};
\draw (c2) -- (d6) {};
\draw (c2) -- (d7) {};
\draw (c2) -- (d8) {};
\draw (c2) -- (d9) {};
\draw (c2) -- (d10) {};
\draw (c2) -- (d11) {};
\draw (c2) -- (d12) {};
\draw (c2) -- (d13) {};

\draw (c3) -- (d1) {};
\draw (c3) -- (d2) {};
\draw (c3) -- (d3) {};
\draw (c3) -- (d4) {};
\draw (c3) -- (d5) {};
\draw (c3) -- (d6) {};
\draw (c3) -- (d7) {};
\draw (c3) -- (d8) {};
\draw (c3) -- (d9) {};
\draw (c3) -- (d10) {};
\draw (c3) -- (d11) {};
\draw (c3) -- (d12) {};
\draw (c3) -- (d13) {};

\draw (c4) -- (d1) {};
\draw (c4) -- (d2) {};
\draw (c4) -- (d3) {};
\draw (c4) -- (d4) {};
\draw (c4) -- (d5) {};
\draw (c4) -- (d6) {};
\draw (c4) -- (d7) {};
\draw (c4) -- (d8) {};
\draw (c4) -- (d9) {};
\draw (c4) -- (d10) {};
\draw (c4) -- (d11) {};
\draw (c4) -- (d12) {};
\draw (c4) -- (d13) {};

\draw (c5) -- (d1) {};
\draw (c5) -- (d2) {};
\draw (c5) -- (d3) {};
\draw (c5) -- (d4) {};
\draw (c5) -- (d5) {};
\draw (c5) -- (d6) {};
\draw (c5) -- (d7) {};
\draw (c5) -- (d8) {};
\draw (c5) -- (d9) {};
\draw (c5) -- (d10) {};
\draw (c5) -- (d11) {};
\draw (c5) -- (d12) {};
\draw (c5) -- (d13) {};

\draw (c6) -- (d1) {};
\draw (c6) -- (d2) {};
\draw (c6) -- (d3) {};
\draw (c6) -- (d4) {};
\draw (c6) -- (d5) {};
\draw (c6) -- (d6) {};
\draw (c6) -- (d7) {};
\draw (c6) -- (d8) {};
\draw (c6) -- (d9) {};
\draw (c6) -- (d10) {};
\draw (c6) -- (d11) {};
\draw (c6) -- (d12) {};
\draw (c6) -- (d13) {};

\draw (c7) -- (d1) {};
\draw (c7) -- (d2) {};
\draw (c7) -- (d3) {};
\draw (c7) -- (d4) {};
\draw (c7) -- (d5) {};
\draw (c7) -- (d6) {};
\draw (c7) -- (d7) {};
\draw (c7) -- (d8) {};
\draw (c7) -- (d9) {};
\draw (c7) -- (d10) {};
\draw (c7) -- (d11) {};
\draw (c7) -- (d12) {};
\draw (c7) -- (d13) {};

\draw (d1) -- (e1) {};
\draw (d1) -- (e2) {};
\draw (d1) -- (e3) {};
\draw (d1) -- (e4) {};
\draw (d1) -- (e5) {};
\draw (d1) -- (e6) {};
\draw (d1) -- (e7) {};
\draw (d1) -- (e8) {};
\draw (d1) -- (e9) {};
\draw (d1) -- (e10) {};
\draw (d1) -- (e11) {};
\draw (d1) -- (e12) {};
\draw (d1) -- (e13) {};
\draw (d1) -- (e14) {};
\draw (d1) -- (e15) {};
\draw (d1) -- (e16) {};
\draw (d1) -- (e17) {};
\draw (d1) -- (e18) {};
\draw (d1) -- (e19) {};

\draw (d2) -- (e1) {};
\draw (d2) -- (e2) {};
\draw (d2) -- (e3) {};
\draw (d2) -- (e4) {};
\draw (d2) -- (e5) {};
\draw (d2) -- (e6) {};
\draw (d2) -- (e7) {};
\draw (d2) -- (e8) {};
\draw (d2) -- (e9) {};
\draw (d2) -- (e10) {};
\draw (d2) -- (e11) {};
\draw (d2) -- (e12) {};
\draw (d2) -- (e13) {};
\draw (d2) -- (e14) {};
\draw (d2) -- (e15) {};
\draw (d2) -- (e16) {};
\draw (d2) -- (e17) {};
\draw (d2) -- (e18) {};
\draw (d2) -- (e19) {};

\draw (d3) -- (e1) {};
\draw (d3) -- (e2) {};
\draw (d3) -- (e3) {};
\draw (d3) -- (e4) {};
\draw (d3) -- (e5) {};
\draw (d3) -- (e6) {};
\draw (d3) -- (e7) {};
\draw (d3) -- (e8) {};
\draw (d3) -- (e9) {};
\draw (d3) -- (e10) {};
\draw (d3) -- (e11) {};
\draw (d3) -- (e12) {};
\draw (d3) -- (e13) {};
\draw (d3) -- (e14) {};
\draw (d3) -- (e15) {};
\draw (d3) -- (e16) {};
\draw (d3) -- (e17) {};
\draw (d3) -- (e18) {};
\draw (d3) -- (e19) {};

\draw (d4) -- (e1) {};
\draw (d4) -- (e2) {};
\draw (d4) -- (e3) {};
\draw (d4) -- (e4) {};
\draw (d4) -- (e5) {};
\draw (d4) -- (e6) {};
\draw (d4) -- (e7) {};
\draw (d4) -- (e8) {};
\draw (d4) -- (e9) {};
\draw (d4) -- (e10) {};
\draw (d4) -- (e11) {};
\draw (d4) -- (e12) {};
\draw (d4) -- (e13) {};
\draw (d4) -- (e14) {};
\draw (d4) -- (e15) {};
\draw (d4) -- (e16) {};
\draw (d4) -- (e17) {};
\draw (d4) -- (e18) {};
\draw (d4) -- (e19) {};

\draw (d5) -- (e1) {};
\draw (d5) -- (e2) {};
\draw (d5) -- (e3) {};
\draw (d5) -- (e4) {};
\draw (d5) -- (e5) {};
\draw (d5) -- (e6) {};
\draw (d5) -- (e7) {};
\draw (d5) -- (e8) {};
\draw (d5) -- (e9) {};
\draw (d5) -- (e10) {};
\draw (d5) -- (e11) {};
\draw (d5) -- (e12) {};
\draw (d5) -- (e13) {};
\draw (d5) -- (e14) {};
\draw (d5) -- (e15) {};
\draw (d5) -- (e16) {};
\draw (d5) -- (e17) {};
\draw (d5) -- (e18) {};
\draw (d5) -- (e19) {};

\draw (d6) -- (e1) {};
\draw (d6) -- (e2) {};
\draw (d6) -- (e3) {};
\draw (d6) -- (e4) {};
\draw (d6) -- (e5) {};
\draw (d6) -- (e6) {};
\draw (d6) -- (e7) {};
\draw (d6) -- (e8) {};
\draw (d6) -- (e9) {};
\draw (d6) -- (e10) {};
\draw (d6) -- (e11) {};
\draw (d6) -- (e12) {};
\draw (d6) -- (e13) {};
\draw (d6) -- (e14) {};
\draw (d6) -- (e15) {};
\draw (d6) -- (e16) {};
\draw (d6) -- (e17) {};
\draw (d6) -- (e18) {};
\draw (d6) -- (e19) {};

\draw (d7) -- (e1) {};
\draw (d7) -- (e2) {};
\draw (d7) -- (e3) {};
\draw (d7) -- (e4) {};
\draw (d7) -- (e5) {};
\draw (d7) -- (e6) {};
\draw (d7) -- (e7) {};
\draw (d7) -- (e8) {};
\draw (d7) -- (e9) {};
\draw (d7) -- (e10) {};
\draw (d7) -- (e11) {};
\draw (d7) -- (e12) {};
\draw (d7) -- (e13) {};
\draw (d7) -- (e14) {};
\draw (d7) -- (e15) {};
\draw (d7) -- (e16) {};
\draw (d7) -- (e17) {};
\draw (d7) -- (e18) {};
\draw (d7) -- (e19) {};

\draw (d8) -- (e1) {};
\draw (d8) -- (e2) {};
\draw (d8) -- (e3) {};
\draw (d8) -- (e4) {};
\draw (d8) -- (e5) {};
\draw (d8) -- (e6) {};
\draw (d8) -- (e7) {};
\draw (d8) -- (e8) {};
\draw (d8) -- (e9) {};
\draw (d8) -- (e10) {};
\draw (d8) -- (e11) {};
\draw (d8) -- (e12) {};
\draw (d8) -- (e13) {};
\draw (d8) -- (e14) {};
\draw (d8) -- (e15) {};
\draw (d8) -- (e16) {};
\draw (d8) -- (e17) {};
\draw (d8) -- (e18) {};
\draw (d8) -- (e19) {};

\draw (d9) -- (e1) {};
\draw (d9) -- (e2) {};
\draw (d9) -- (e3) {};
\draw (d9) -- (e4) {};
\draw (d9) -- (e5) {};
\draw (d9) -- (e6) {};
\draw (d9) -- (e7) {};
\draw (d9) -- (e8) {};
\draw (d9) -- (e9) {};
\draw (d9) -- (e10) {};
\draw (d9) -- (e11) {};
\draw (d9) -- (e12) {};
\draw (d9) -- (e13) {};
\draw (d9) -- (e14) {};
\draw (d9) -- (e15) {};
\draw (d9) -- (e16) {};
\draw (d9) -- (e17) {};
\draw (d9) -- (e18) {};
\draw (d9) -- (e19) {};

\draw (d10) -- (e1) {};
\draw (d10) -- (e2) {};
\draw (d10) -- (e3) {};
\draw (d10) -- (e4) {};
\draw (d10) -- (e5) {};
\draw (d10) -- (e6) {};
\draw (d10) -- (e7) {};
\draw (d10) -- (e8) {};
\draw (d10) -- (e9) {};
\draw (d10) -- (e10) {};
\draw (d10) -- (e11) {};
\draw (d10) -- (e12) {};
\draw (d10) -- (e13) {};
\draw (d10) -- (e14) {};
\draw (d10) -- (e15) {};
\draw (d10) -- (e16) {};
\draw (d10) -- (e17) {};
\draw (d10) -- (e18) {};
\draw (d10) -- (e19) {};

\draw (d11) -- (e1) {};
\draw (d11) -- (e2) {};
\draw (d11) -- (e3) {};
\draw (d11) -- (e4) {};
\draw (d11) -- (e5) {};
\draw (d11) -- (e6) {};
\draw (d11) -- (e7) {};
\draw (d11) -- (e8) {};
\draw (d11) -- (e9) {};
\draw (d11) -- (e10) {};
\draw (d11) -- (e11) {};
\draw (d11) -- (e12) {};
\draw (d11) -- (e13) {};
\draw (d11) -- (e14) {};
\draw (d11) -- (e15) {};
\draw (d11) -- (e16) {};
\draw (d11) -- (e17) {};
\draw (d11) -- (e18) {};
\draw (d11) -- (e19) {};

\draw (d12) -- (e1) {};
\draw (d12) -- (e2) {};
\draw (d12) -- (e3) {};
\draw (d12) -- (e4) {};
\draw (d12) -- (e5) {};
\draw (d12) -- (e6) {};
\draw (d12) -- (e7) {};
\draw (d12) -- (e8) {};
\draw (d12) -- (e9) {};
\draw (d12) -- (e10) {};
\draw (d12) -- (e11) {};
\draw (d12) -- (e12) {};
\draw (d12) -- (e13) {};
\draw (d12) -- (e14) {};
\draw (d12) -- (e15) {};
\draw (d12) -- (e16) {};
\draw (d12) -- (e17) {};
\draw (d12) -- (e18) {};
\draw (d12) -- (e19) {};

\draw (d13) -- (e1) {};
\draw (d13) -- (e2) {};
\draw (d13) -- (e3) {};
\draw (d13) -- (e4) {};
\draw (d13) -- (e5) {};
\draw (d13) -- (e6) {};
\draw (d13) -- (e7) {};
\draw (d13) -- (e8) {};
\draw (d13) -- (e9) {};
\draw (d13) -- (e10) {};
\draw (d13) -- (e11) {};
\draw (d13) -- (e12) {};
\draw (d13) -- (e13) {};
\draw (d13) -- (e14) {};
\draw (d13) -- (e15) {};
\draw (d13) -- (e16) {};
\draw (d13) -- (e17) {};
\draw (d13) -- (e18) {};
\draw (d13) -- (e19) {};

\end{tikzpicture}

\caption{First spheres of an anti-tree, \cite{KellerRose-22b}}
\end{minipage}
\end{figure}
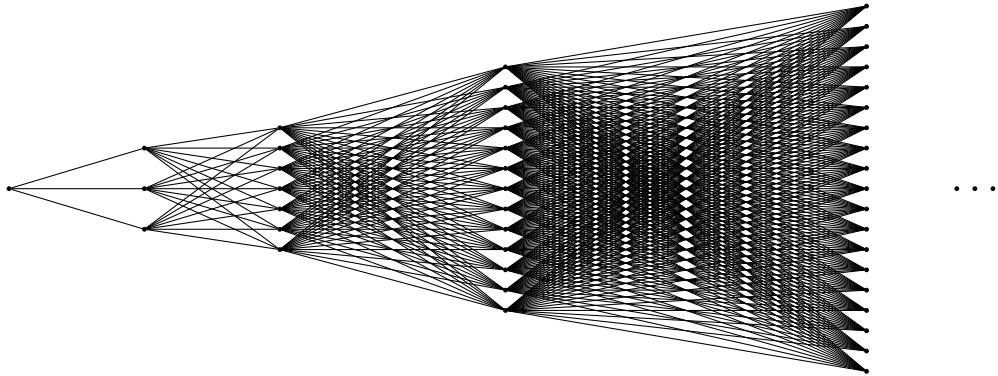
	Here, we consider graphs $ b $ with standard weights, i.e., $ b(x,y)\in\{0,1\} $, $ x,y\in X $.  If $ b $ is a connected graph over $ (X,1) $, we denote  by $$  S_{k}=\{x\in X\mid d(x,o)=k\} ,\qquad k\in\mathbb{Z}, $$ the distance spheres with respect to the combinatorial graph distance $ d $ to the root $ o\in X $. Clearly, $ S_{-k}=\emptyset $ for $ k\in \NN $. 
	
	For a so-called sphere function $ s:\NN  \longrightarrow \NN$ the corresponding anti-tree is the graph $ b $ with standard weights over $(X,10$
	such that the distance spheres satisfy
	$$  \#S_{k}=s_{k}  $$ for $ k\ge 1$,
	\begin{align*}
		b(x,y)=b(y,x)=1,\qquad x\in S_{k},y\in S_{k+1}
	\end{align*}
	for $ k\ge0  $ and $ b(x,y)=0 $ otherwise. 
	For convenience, we let $ s_{0}=1 $ and $ s_{-1}=0 $.

\begin{theorem}[{\cite[Theorem~4.9]{KellerRose-22b}}]\label{thm:antitree1} Let $ \gamma\in (0,2) $ and $X$ an anti-tree with root $o\in X$ and sphere function
\[
	s_k=[k^\gamma],\qquad k\in\NN_0.
	\] 
	Denote by $\rho$ the path degree metric with $S=1$ given in Example~\ref{ex:pathdegreemetric}.
  For $ n=\frac{4(\gamma+1)}{2-\gamma}> 2 $, there exists a constant $C>0$ such that for all $x,y\in X$ with $ |x|\neq |y| $ and $t\geq 2\cdot 72^2$ we have  
	\begin{multline*}
		p_{t}(x,y)
		\leq 
		C
		\left(1+\frac{\rho(o,x)^2+\rho(o,y)^2}{t}\right)^{\frac{n}{2}}
\\				\cdot
		\frac{\left(1\vee (\sqrt{t^2+\rho(x,y)^2}-t)\right)^{\frac{n}{2}}}{m(B_{o}(\sqrt t))}
		\exp\left(
		-t\zeta\left(\frac{\rho(x,y)}{t}\right)\right),
	\end{multline*}
and for the combinatorial distance $d$ for $t>2|d(o,x)^{(2-\gamma)/2}-d(o,y)^{(2-\gamma)/2}|^{2}$
\begin{multline*}
			p_{t}(x,y)
	\leq 
	C 
	\left(1+\frac{ d(o,x)^{2({\gamma+1})}+d(o,y)^{2(\gamma+1)}}{t^{d}}\right)
\\	\cdot \frac{ 1}{t^{\frac{d}{2}}}
	\exp\left(-\frac{ |d(o,x)^{(2-\gamma)/2}-d(o,y)^{(2-\gamma)/2}|^{2}}{Ct}\right).
\end{multline*}
\end{theorem}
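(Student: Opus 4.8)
\medskip
\noindent\textbf{Proof strategy.} The plan is to reduce the statement to an on-diagonal heat kernel bound and then to upgrade it to the off-diagonal bounds by a Grigor'yan-type two-point argument in the spirit of Theorem~\ref{thm:ChenFolz}, but run with the sharp intrinsic Davies--Gaffney estimate underlying Theorem~\ref{thm:BHY}, so that the universal Gaussian term $\zeta$ survives the iteration. Throughout one works with the path degree metric $\rho$ with $S=1$ from Example~\ref{ex:pathdegreemetric}. For $s_k=[k^\gamma]$ one has, on the $k$-th sphere, $m\equiv1$ and $\deg\equiv s_{k-1}+s_{k+1}\asymp k^\gamma$, so each edge between $S_\ell$ and $S_{\ell+1}$ has $\rho$-length $\asymp\ell^{-\gamma/2}$ (the cap $S=1$ being active only near the root); since between distinct spheres every path must traverse all intermediate ones — anti-trees having no edges inside a sphere — summation along a shortest path gives $\rho(o,x)\asymp|x|^{(2-\gamma)/2}$, $\rho(x,y)\asymp\bigl|d(o,x)^{(2-\gamma)/2}-d(o,y)^{(2-\gamma)/2}\bigr|$ for $|x|\neq|y|$, and $m(B_o(r))\asymp r^{\,2(\gamma+1)/(2-\gamma)}=r^{n/2}$, so that $n/2$ is the intrinsic volume dimension. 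These comparisons account for every exponent in the statement and reduce the combinatorial-distance bound to the $\rho$-version: for $t>2\rho(x,y)^2$ the argument of $\zeta$ is small, whence $t\,\zeta(\rho(x,y)/t)\asymp\rho(x,y)^2/t$, and crude monomial estimates such as $m(B_o(\sqrt t))\gtrsim t^{(\gamma+1)/2}$ turn the $\rho$-estimate into the polynomially cruder combinatorial one.

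The heart of the proof is the on-diagonal bound
\[
p_t(x,x)\ \lesssim\ \Bigl(1+\tfrac{\rho(o,x)^2}{t}\Bigr)^{n/2}\,\frac{1}{m(B_o(\sqrt t))}.
\]
I would obtain it from a relative Faber--Krahn, Nash, or localized Euclidean isoperimetric inequality for these anti-trees, whose dimension is dictated by the intrinsic volume growth $m(B_o(r))\asymp r^{n/2}$; the base-point correction records that the inequality is \emph{relative} — it is the intrinsic ball $B_o(\sqrt t)$ centered at the root, not the one centered at $x$, that controls the return probability, and the two differ strongly for $x$ far from $o$, since the intrinsic balls there are thin and elongated in the radial direction. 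A more explicit alternative uses the radial reduction: restricting $\Delta$ to spherically symmetric functions yields the weighted birth--death chain (Jacobi operator) on $\NN_0$ with vertex measure $s_k$ and conductances $s_{k-1}s_k$, and testing $\euler^{-t\Delta}$ against $\Eins_{S_k}$ gives $p_t(x,x)\le s_{|x|}\,\tilde p_t(|x|,|x|)$ with $\tilde p$ the heat kernel of that chain, which one-dimensional techniques bound.

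The remainder is assembly. As in the proof of Theorem~\ref{thm:Bella}, the on-diagonal profiles are monomials in $t$ up to the polynomial correction, hence regular in the required sense, and the trivial bound $p_t\le1$ extends them to all $t>0$ and supplies the exponential-control hypothesis; running the two-point iteration with the intrinsic estimate from Theorem~\ref{thm:BHY} then produces the off-diagonal bound with $\euler^{-t\zeta(\rho(x,y)/t)}$ and the stated prefactor, valid once $t$ exceeds an absolute constant — the explicit $2\cdot72^2$ being merely an artifact of the constants in Example~\ref{ex:pathdegreemetric} and of the scale past which the intrinsic volume growth has stabilized. Substituting the metric comparisons of the first paragraph yields the combinatorial-distance version.

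I expect the on-diagonal bound, together with its exact base-point dependence, to be the main obstacle. Because these anti-trees have unbounded vertex degree and non-doubling intrinsic volume, the passage from an isoperimetric or Faber--Krahn input to a pointwise return-probability estimate is not the textbook one: one must track precisely how the intrinsic balls deform away from the root, and this is the source of both correction factors $(1+(\rho(o,x)^2+\rho(o,y)^2)/t)^{n/2}$ and $(1\vee(\sqrt{t^2+\rho(x,y)^2}-t))^{n/2}$. A secondary difficulty is keeping the \emph{exact} $\zeta$ — rather than the weaker $\euler^{-c\rho(x,y)^2/t}$ that Theorem~\ref{thm:ChenFolz} would give directly — through the iteration, which forces the two-point argument to be fed the sharp intrinsic Davies--Gaffney estimate behind Theorem~\ref{thm:BHY} instead of a mere Gaffney-type $L^2$ bound.
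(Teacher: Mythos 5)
Your outline is not a proof as it stands; the two steps you yourself flag as the ``main obstacle'' are exactly the content of the theorem, and they are left unestablished. First, the anchored on-diagonal bound $p_t(x,x)\lesssim(1+\rho(o,x)^2/t)^{n/2}/m(B_o(\sqrt t))$ is simply asserted to follow ``from a relative Faber--Krahn, Nash, or localized Euclidean isoperimetric inequality''; no such inequality is verified for these anti-trees, and since the vertex degree is unbounded and the intrinsic volume is not doubling in the relevant uniform sense, this verification (and the extraction of the root-anchored correction factor) is precisely where all the work lies. Your parenthetical radial reduction $p_t(x,x)\le s_{|x|}\,\tilde p_t(|x|,|x|)$ is correct and is in fact the closer route to the actual argument: the paper does not prove the theorem here but cites \cite{KellerRose-22b} and describes the technique as exploiting the rotational symmetry and a comparison principle with the heat kernel of a model birth--death graph over $\NN_0$ (with anchored Sobolev/Faber--Krahn input; see also the remark that such model graphs are studied further in \cite{KKNR-25}); but you do not carry out the one-dimensional estimate either, so the crux is still missing.

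Second, the assembly step is not sound as described. Theorem~\ref{thm:ChenFolz} produces only $\exp(-c_3\rho(x,y)^2/t)$ with an unspecified constant, from on-diagonal profiles $f_x,f_y$ that are functions of $t$ alone; here the on-diagonal input depends on $\rho(o,x)$, and the conclusion you need carries the exact universal Gaussian $\exp(-t\zeta(\rho(x,y)/t))$ together with the prefactor $\bigl(1\vee(\sqrt{t^2+\rho(x,y)^2}-t)\bigr)^{n/2}$. ``Feeding the two-point iteration the sharp Davies--Gaffney estimate behind Theorem~\ref{thm:BHY}'' is not a defined procedure: Theorem~\ref{thm:BHY} is a pointwise bound, not the weighted $L^2$/integrated maximum principle the iteration consumes, and you give no argument that the integral maximum principle can be run with the $\zeta$-optimal weight so that both the exact Gaussian and that specific prefactor (which in the cited works arises from Davies-type optimization of a $\cosh$-weight, not from ball deformation) survive. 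Your geometric dictionary ($\rho(o,x)\asymp|x|^{(2-\gamma)/2}$, $m(B_o(r))\asymp r^{n/2}$, and the reduction of the combinatorial-distance statement to the $\rho$-statement) is correct and matches the exponents, but without the anchored on-diagonal estimate and a mechanism that genuinely yields the stated Gaussian term, the proposal does not yet prove the theorem.
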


A particular feature of Theorem~\ref{thm:antitree1} is the dependence of the heat kernel upper bound with respect to a root, which sometimes is referred to anchored upper heat kernel bounds. The terminology is taken from \cite{AndresDS-16,MourratOtto-16}. The technique relies on the rotational symmetry of the anti-tree and hence a comparison principle of the heat kernel and the heat kernel on a model graph over the natural numbers. Heat kernel bounds on such model graphs will investigated in more detail in \cite{KKNR-25}.
\section{Localized off-diagonal bounds via intrinsic metrics}\label{sec:localized}

In the following, the most recent results on localized off-diagonal upper heat kernel bounds obtained in \cite{KellerRose-22a, KellerRose-24, Rose-24} are presented. Originally, such characterization in terms of relative Faber-Krahn inequalities goes back to Grigor'yan, \cite{Grigoryan-94,Grigoryan-09}. The results presented in this section are a version of its discrete counterpart.

The motivation for considering localized bounds originates from the existence of manifolds with non-negative Ricci curvature with a heat kernel obeying different bounds for short and large times. In particular, they do not satisfy a uniform Nash inequality as presented in Theorem~\ref{thm:ckkw}. A standard example is given by $\RR\times \mathbb{S}^1$, where the heat kernel is just the product of the Gauss-Weierstrass function and the heat kernel of the unit circle. For large times, the heat kernel is basically the former, while for small times, the latter. 

Localized bounds depend on the position a heat kernel is evaluated at. This will be of particular importance for graphs with unbounded geometry, as it turns out that the upper heat kernel bound is only affected by the vertex degree of the arguments.

Below we obtain localized versions of Theorem~\ref{thm:ckkw} in terms of relative Faber-Krahn inequalities, which are known to be equivalent to scale-invariant Nash and Sobolev inequalities, cf.~\cite{BakryCLS-95, Barlow-book}. Distances will be measured with respect to intrinsic distances. In fact, given relative Faber-Krahn inequalities in balls, we obtain off-diagonal localized Gaussian upper heat kernel bounds with an error depending on the vertex degree at the evaluated vertices. This error will become negligible for large times. Conversely, localized upper heat kernel bounds together with volume doubling with a uniform constant imply relative Faber-Krahn inequalities in balls. There, the resulting Faber-Krahn dimension depends on the doubling dimension and the measure of the considered ball relative to the worst value the measure assigns to a single vertex inside this ball. This error again becomes neglible for large balls if it does not grow too fast.

In \cite{Rose-24}, the main theorems characterize large-scale heat kernel upper bounds in terms of large-scale Faber-Krahn inequalities. One main advantage of such results is that they apply to random models, which typically satisfy geometric assumptions only in large enough balls with high probability, \cite{Barlow-04}. Our contribution here is the validity of Gaussian upper bounds for all times instead of a restricted time interval if Faber-Krahn inequalities hold only in large balls. Conversely, an upper heat kernel bound for all times yields Faber-Krahn for all sufficiently large balls.
\\

The main result of this section will be formulated in terms of an intrinsic metric $\rho$ with jump size $S>0$.
We assume for simplicity that the intrinsic diameter satisfies $$\diam_\rho(X)=\sup\{\rho(x,y)\colon x,y\in X\}=\infty.$$ 
Note that in the case $m=\deg$, the combinatorial distance $d$ is automatically intrinsic with jump size one.
As usual, we denote distance balls with respect to $\rho$ by $$ B_x(r):=B_x^\rho(r)=\{y\in X\mid \rho(x,y)\leq r\}$$
for $r\geq 0 $, $x\in X$. If all distance balls with respect to an intrinsic metric $\rho$ are compact, i.e., they are finite, we say that $\rho$ has \emph{finite balls}. 
An intrinsic metric $ \rho  $ on a graph $b$ over $(X,m)$ is called  \emph{intrinsic path metric}  if there is $ w: X\times X\to [0,\infty] $ such that for all $ x,y\in X $
\begin{align*}
	\rho (x,y)=\inf_{x=x_{0}\sim \ldots\sim x_{n}=y}\sum_{j=1}^{n}w(x_{j-1},x_{j})
\end{align*}
{and $w(x,y)<\infty $ if and only if $ x\sim y $.}
\eat{\begin{example} The choice $ w(x,y)=(\Deg_{x}\vee\Deg_{y})^{-1/2} $ for neighbors $ x\sim y $ and $ w(x,y) =\infty$ otherwise always yields an intrinsic path metric
\end{example}}

\begin{assumption}\label{assumption2} We assume that the intrinsic metric is a path metric whose distance balls are compact and that the jump size satisfies $S<\infty$.
\end{assumption}

Assumption~\ref{assumption2} implies that 
the metric space $ (X,\rho) $ is complete and geodesic, i.e., any two vertices $ x,y\in X $ can be connected by a path $x= x_{0}\sim\ldots \sim x_{n}=y $ such that $ \rho(x,y)=\rho(x,x_{j})+\rho(x_{j},y) $ for all $ j=0,\ldots,n $, see \cite[Chapter~11.2]{KellerLW-21}.
\\

\eat{
For $f\in \cC(X)$ and $x,y\in X$ we abbreviate $\nabla_{xy}f:=f(x)-f(y)$ and set
\[
|\nabla f|^2(x):=\frac{1}{m(x)}\sum_{y\in X}b(x,y)(\nabla_{xy}f)^2.
\]
}
\eat{
Recall the first Dirichlet eigenvalue of subsets $U\subset X$ 
\[
\lambda (U)=\inf_{f\in\cC(U),f\neq 0}\frac{\sum_{x\in U}m(x) f(x)\Delta f(x)}{\sum_{x\in U} m(x)|f(x)|^2}.
\]

We will be dealing with the following properties a graph might or might not satisfy.
}


In contrast to the uniform results presented in Section~\ref{sec:uniform} and the setting of Riemannian manifolds, on graphs the implications of the characterization of heat kernel bounds involve certain error functions encoding local geometric properties. Therefore, we will introduce the error terms already in the definition of the notions involved in the characterization. 

Let $n,r\geq 0$ and functions $a,N\colon X\times [r,\infty)\to(0,\infty)$, $\Psi\colon X\times X\times [0,\infty)\to (0,\infty)$, $\Phi\colon X\times [0,\infty)\to (0,\infty)$. 
\begin{itemize}
\item[(FK)] The \emph{relative Faber-Krahn inequality} $FK(r,a,N)$ is satisfied in $X$, if for all $x\in X$, $R\geq r$, and $U\subset B_x(R)$, the first eigenvalue of the Dirichlet Laplacian $\lambda(U)$ satisfies
\[
\lambda(U):=\inf_{f\in\cC(U),f\neq 0}\frac{\sum_{x\in U}m(x) f(x)\Delta f(x)}{\sum_{x\in U} m(x)|f(x)|^2}\geq \frac{a_x(R)}{R^2}\left(\frac{m(B_x(R))}{m(U)}\right)^{\frac{2}{N_x(R)}}.
\]
\item[(G)] \emph{Gaussian upper bounds} $G(\Psi,N)$ holds in $X$ if for all $x,y\in X$ and $t\geq 0$ the heat kernel satisfies
\[
p_t(x,y)\leq \Psi_{xy}(\sqrt t)
\frac{ 
\left(1\vee \frac{t}{S^2}\arsinh^2\left(\frac{\rho(x,y) S}{t}\right)\right)^{\frac{N}{2}}
}{\sqrt{m(B_x(\sqrt t))m(B_y(\sqrt t))}}
\exp\left(-\frac{t}{S^{2}}\zeta\left(\frac{\rho(x,y)S}{t}\right)\right).
\]
\item[(VD)]The \emph{volume doubling property} $VD(\Phi,N) $ is satisfied in $X$ if for all $x\in X$ and $r\leq R$, we have 
\[
\frac{m(B_x(R))}{m(B_x(r))}\leq \Phi_x(r)\left(\frac{R}{r}\right)^{N}.
\]
\end{itemize}

Below are the error terms which will play a role in our main theorem. The conclusion of implication $\mathrm{(FK) \Rightarrow (VD) \& (G)}$ will depend on the functions $\Phi$ and $\Psi$ given first. Second, the conclusion of the implication $\mathrm{(VD) \& (G) \Rightarrow (FK)} $  will be given in terms of a new variable dimension.

Let $n>0$, $r\geq0$, $x\in X$, and define the error function $\Phi$ by 
\[
\Phi_x(s)=\Phi_x(n,r, s)=
\begin{cases}
r^n[1\vee\Deg_x]^{\frac{n}{2}+\theta(r)}&\colon s<r,\\[1.5ex]
[1\vee\Deg_x]^{\theta(s)}&\colon \text{else},
\end{cases}
\]
where we put 
\[
\theta(s)=\theta(n,s)=C_\theta\cdot s^{-\frac{1}{n+2}}, 
\]
and $C_\theta=(288S)^{\frac{1}{n+2}}(n+2)$.
For $\tau\geq0$ and $x,y\in X$, we further introduce the error function by
\[
\Psi_{xy}(\sqrt \tau)^2=\Phi_x(\sqrt{\tau_\rho})\Phi_y(\sqrt{\tau_\rho}),
\]
where we set
\[
\tau_\rho:=\tau_{\rho(x,y)}
= 
\frac{S^2}{2\arsinh^2\left(\frac{(\sqrt \tau\vee \rho(x,y))S}{\tau}\right)}.
\]
Further, we define
a dimension function $n'\colon X\times [S,\infty)\to (0,\infty)$ by
\[
n_o'(r)= n\vee \frac{\ln\left[m(B_o(Ar))\|\tfrac1m\|_{B_o(Ar)}\right]}{\ln \frac{r}{(\ln r)^{n+3}}},
\]
where $A=\exp(2^{19n}\euler)$ and $ \|f\|_{W}$ denotes the supremum norm of $f$ in $W$.

Our main theorem reads as follows.
\begin{theorem}\label{thm:main}
Let $S,n>0$, $r\geq 1000S$, and $X$ a graph as above. 
\begin{enumerate}[(i)]
\item If $X$ satisfies $FK(r,a,n)$ for some $a>0$, then there exists  $C=C({S,r,a,n})>0$ such that $X$ satisfies $G(C\Psi,n)$ and $VD(C\Phi,n)$.
\item If $X$ satisfies $G(C,n)$ and $VD(C,n)$ for some $C>0$, then there exists a positive constant $a'=a'({C,n})>0$ such that $X$ satisfies $FK(8S,a',n')$.
\end{enumerate}
\end{theorem}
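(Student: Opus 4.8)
The plan is to follow the strategy of Grigor'yan's characterization of heat kernel bounds via relative Faber-Krahn inequalities, adapted to the discrete setting, where the main new feature is the careful tracking of the error functions $\Phi$, $\Psi$, and $n'$ that encode the local geometry through $\Deg_x$.

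\textbf{Direction (i): $FK \Rightarrow VD$ \& $G$.} First I would derive the volume doubling property from the relative Faber-Krahn inequality. In the continuous setting this is classical; on graphs one passes from $FK(r,a,n)$ to a lower bound on $\lambda(B_x(R))$ for $U=B_x(R)$, and iterates a standard covering/chaining argument over annuli. The jump size $S<\infty$ together with the path metric structure (Assumption~\ref{assumption2}) guarantees enough overlapping balls to run the iteration; the price is precisely the factor $[1\vee\Deg_x]^{\theta(s)}$, which arises because at scales comparable to $S$ a single vertex can carry a large fraction of the mass of a small ball, so the dimension $n$ must be inflated by $\theta(s)$ at those scales. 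Next, for the Gaussian bound $G(\Psi,n)$, I would first establish an on-diagonal bound $p_t(x,x)\le \Phi_x(\sqrt t)\,/\,m(B_x(\sqrt t))$ from $FK$ via a Nash-type / Faber-Krahn iteration (Moser or Grigor'yan iteration restricted to balls), again picking up the error $\Phi$. Then I would feed this localized on-diagonal bound, which depends only on the spatial location, into Grigor'yan's two-point method (Theorem~\ref{thm:ChenFolz}): monomial-type regularity of $t\mapsto t^{n/2}$, combined with the $\Phi$-error being controlled, allows the two-point method to produce the off-diagonal Gaussian $\exp(-\frac{t}{S^2}\zeta(\rho(x,y)S/t))$ with the $\arsinh^2$-correction factor of order $(1\vee \frac{t}{S^2}\arsinh^2(\rho S/t))^{n/2}$. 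The error $\Psi_{xy}(\sqrt t)^2=\Phi_x(\sqrt{\tau_\rho})\Phi_y(\sqrt{\tau_\rho})$ appears because the two-point method evaluates the on-diagonal bounds at the rescaled time $\tau_\rho$ rather than $t$.

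\textbf{Direction (ii): $VD$ \& $G \Rightarrow FK$.} This is the more delicate direction. Given the uniform Gaussian upper bound $G(C,n)$ and uniform volume doubling $VD(C,n)$, I would reverse Grigor'yan's argument: integrate the heat kernel bound against test functions to obtain a $\lambda$-lower bound for arbitrary $U\subset B_x(R)$. The key quantitative step is to convert the on-diagonal bound $p_t(x,x)\le C/m(B_x(\sqrt t))$, together with volume doubling, into a Faber-Krahn inequality with the variable dimension $n_o'(r)$; the logarithm $\ln[m(B_o(Ar))\|1/m\|_{B_o(Ar)}]$ in the numerator of $n_o'$ enters because the effective dimension witnessed by a ball is controlled by the ratio of the ball's total mass to the smallest vertex mass inside it, and the denominator $\ln(r/(\ln r)^{n+3})$ is the scale at which the iteration stabilizes after the logarithmic corrections from $VD$ and $G$ are absorbed. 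One runs the standard argument deriving $FK$ from a mean value inequality plus volume doubling (as in \cite{Grigoryan-09, Barlow-book}), with all constants tracked explicitly to yield $a'=a'(C,n)$ and the starting scale $8S$.

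\textbf{Main obstacle.} The hard part will be the bookkeeping of the error functions, in particular establishing that $\theta(s)=C_\theta s^{-1/(n+2)}\to 0$ fast enough that the inflated dimension $n+\theta(s)$ and the prefactor $[1\vee\Deg_x]^{\theta(s)}$ are genuinely negligible for large scales, while still being large enough at small scales to absorb the discrete pathologies near a single heavy vertex; the specific constant $C_\theta=(288S)^{1/(n+2)}(n+2)$ is dictated by this balance in the iteration. Equally delicate is verifying that the rescaled time $\tau_\rho$, defined via $\arsinh$, is the correct argument at which the on-diagonal errors should be evaluated so that the composition of the two-point method with the $FK\Rightarrow$ on-diagonal step closes consistently; this is exactly where the $\zeta$ and $\arsinh^2$ terms must match the ones already present in Theorems~\ref{thm:BHY} and~\ref{thm:ChenFolz}. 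The underlying analytic machinery — Moser/Grigor'yan iteration, integrated maximum principle, chaining over balls — is standard once the intrinsic path metric with finite jump size is in place, so no fundamentally new idea beyond careful discrete adaptation should be required.
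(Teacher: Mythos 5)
Your outline captures the right machinery in spirit --- and indeed the paper's part (i) is in substance a reduction to \cite[Theorems~2.6 and~6.3]{Rose-24} and part (ii) to \cite[Theorem~7.8]{Rose-24}, which implement essentially the iteration/integrated-maximum-principle program you describe --- but as a proof of the theorem as stated it has two concrete gaps.

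First, you never address the small-time regime. The hypothesis $FK(r,a,n)$ only gives eigenvalue lower bounds for balls of radius $R\geq r$, so the Faber--Krahn/Nash iteration only produces the on-diagonal bound $p_t(x,x)\lesssim \Phi_x(\sqrt t)/m(B_x(\sqrt t))$ for $t\gtrsim r^2$, whereas $G(\Psi,n)$ is required for \emph{all} $t\geq 0$. The paper bridges $t<r^2$ by combining the universal bound of Theorem~\ref{thm:BHY}, $p_t(x,y)\leq (m(x)m(y))^{-1/2}\exp\bigl(-\tfrac{t}{S^2}\zeta(\cdot)\bigr)$, with the consequence $m(B_x(r))\leq C r^n[1\vee\Deg_x]^{n/2}m(x)$ of the Faber--Krahn inequality (\cite[Lemma~2.1]{Rose-24}), which converts $1/\sqrt{m(x)m(y)}$ into the required $\Psi_{xy}/\sqrt{m(B_x(\sqrt t))m(B_y(\sqrt t))}$. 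Without a step of this kind your argument does not yield $(G)$ for all times; this is precisely the reason for the $s<r$ branch in the definition of $\Phi$, which your bookkeeping discussion does not account for.

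Second, feeding the on-diagonal bound into Theorem~\ref{thm:ChenFolz} cannot produce the Gaussian term demanded by the definition of $(G)$: Chen's two-point theorem yields $\exp(-c_3\rho(x,y)^2/t)$ for $t\geq\rho(x,y)$ with an unspecified constant $c_3$, not the sharp term $\exp\bigl(-\tfrac{t}{S^2}\zeta(\tfrac{\rho S}{t})\bigr)$ together with the polynomial factor $\bigl(1\vee\tfrac{t}{S^2}\arsinh^2(\tfrac{\rho S}{t})\bigr)^{n/2}$, and it says nothing for $t<\rho(x,y)$. To obtain the universal Gaussian one has to run the integrated maximum principle with the precise Davies--Gaffney--Grigor'yan weight, as in \cite{BauerHuaYau-17} and \cite[Theorem~6.3]{Rose-24}, rather than invoke the two-point method as a black box. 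Your direction (ii) is a fair description of the standard reverse implication; the paper simply applies \cite[Theorem~7.8]{Rose-24} with $\gamma=\Phi=\Psi=1$ to obtain $FK(8S,a',n')$ with an explicit constant $a'$.
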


\begin{remark}The proof of (ii) delivers a similar result for any $n''\ge n'$. Therefore, a good upper bound on the function $m(B_o(r))\|\tfrac1m\|_{B_o(r)}$ yields a good estimate on $n'$. A good canditate for an upper bound is given in terms of the vertex degree and the radius of the ball valid under the Faber-Krahn inequality, cf.~\cite{Rose-24}. In order to stay concise, we refrain from this discussion and refer to \cite{Rose-24} for more sophisticated results which take this observation into account.
\end{remark}
\begin{proof}$(i)$ Assume $FK(r,a,n)$.
Property $VD(C\Phi,n)$ follows from \cite[Theorem~2.6]{Rose-24}. Further, we infer from \cite[Theorem~6.3]{Rose-24} the existence of a constant $C=C(a,n,S)\geq 1$ such that for all $t\geq r^2$, we have 
\[
p_t(x,y)\leq C\ \Psi_{xy}(\sqrt t)
\frac{ 
\left(1\vee \frac{t}{S^2}\arsinh^2\left(\frac{\rho(x,y) S}{t}\right)\right)^{\frac{N}{2}}
}{\sqrt{m(B_x(\sqrt t))m(B_y(\sqrt t))}}
\exp\left(-t \ \zeta\left(\frac{\rho(x,y)S}{t}\right)\right).
\]
Next, we estimate the heat kernel for small times. To this end,
note that 
according to \cite[Lemma~2.1]{Rose-24}, there exists a constant $C=C(a,n)\geq 1$ such that 
\[
\frac{m(B_x(r))}{m(x)}\leq Cr^n\ [1\vee \Deg_x]^\frac{n}{2}
\]
for all $x\in X$. Hence, for any $t<r^2$ and $x\in X$, we have 
\[
\frac{1}{m(x)}=
\frac{m(B_x(\sqrt t))}{m(x)m(B_x(\sqrt t))}\leq \frac{m(B_x(r))}{m(x)m(B_x(\sqrt t))}\leq \frac{Cr^n \ [1\vee \Deg_x]^\frac{n}{2}}{m(B_x(\sqrt t))},
\]
and thus
\[
\frac{1}{\sqrt{m(x)m(y)}}\leq Cr^n\frac{ [1\vee \Deg_x]^\frac{n}{4}[1\vee \Deg_y]^\frac{n}{4}}{\sqrt{m(B_x(\sqrt t))m(B_y(\sqrt t))}}\leq \frac{C \ \Psi_{xy}(\sqrt \tau)}{\sqrt{m(B_x(\sqrt t))m(B_y(\sqrt t))}}.
\]
For $t\in[0,r^2]$, the above estimate applied to the universal heat kernel bound from Theorem~\ref{thm:BHY} implies
\begin{multline*}
p_t(x,y)\leq \frac{1}{\sqrt{m(x)m(y)}}\exp\left(-t \ \zeta\left(\frac{\rho(x,y)S}{t}\right)\right)
\\
\le \frac{C \ \Psi_{xy}(\sqrt \tau)}{\sqrt{m(B_x(\sqrt t))m(B_y(\sqrt t))}}\exp\left(-t \ \zeta\left(\frac{\rho(x,y)S}{t}\right)\right).
\end{multline*}
Hence, the desired property $G(C\Psi,n)$ follows by combing the above bounds for times less and larger or equal to $r$.
\\
$(ii)$ If $G(C,n)$ and $VD(C,n)$ hold in $X$, we apply \cite[Theorem~7.8]{Rose-24} with the choices $\gamma=\Phi=\Psi=1$ to obtain $FK(\exp(1\vee 32S),a',n')$ in $X$, where 
\[
a'=\left[C^{22/n}2^{38}\exp(2^{1+19n}\euler+{2}/{n})\right]^{-1}.\qedhere
\]
\end{proof}

As already mentioned in Section~\ref{sec:uniform}, since localized Faber-Krahn inequalities are equivalent to localized Nash and Sobolev inequalities, these inequalities can as well be added to the list of equivalent properties for localized Gaussian upper bounds. The drawback of the above theorem is that in order to obtain the (FK) inside a ball, one has to assume (G) and (VD) inside a much larger ball. This can be circumvented in the case of Nash and Sobolev inequalities by introducing volume corrections in their definitions, see \cite{KellerRose-22a,KellerRose-24} for purely local results in this direction.

\section{The optimal metric and Gaussian problem}\label{sec:optimal}
This section discusses the problem of optimality of the Gaussian term for the heat kernel $p$ of a given graph $b$ over the discrete measure space $(X,m)$. Since the choice of a metric is not canonic, a natural question regards the existence of an optimal metric, i.e., delivering an optimal Gaussian term. Davies' resolves this problem via a certain notion of regular graph introduced in \cite{Davies-93a}. We compare these results with another more general approach given in terms of the maximum of intrinsic metrics.
\\

Consider the integer lattice $(\ZZ^2, b, 1)$, where $b(z_1,z_2)=1$ iff $|x_1-y_1|+|x_2-y_2|=1$, $z_i=(x_i,y_i)\in\ZZ^2$, $i\in\{1,2\}$. According to \cite{Davies-93a}, by the factorization property of the heat kernel and Pang's estimates we obtain for all $z_1,z_2\in\ZZ^2$ and $t>0$
\begin{multline*}
p_t^{\ZZ^2}(z_1,z_2)=p_t^{\ZZ}(x_1,y_1)p_t^{\ZZ}(x_2,y_2)
\\
\leq \exp\left(-\frac{d(x_1,y_1)^2}{(4+\epsilon)t}-\frac{d(x_2,y_2)^2}{(4+\epsilon)t}\right)=\exp\left(-\frac{|z_1-z_2|^2}{(4+\epsilon)t}\right).
\end{multline*}

However, Davies' result presented in Section~\ref{sec:universal} transferred to $(\ZZ^2,b, 1)$ yields for all $z_1,z_2\in\ZZ^2$ and $\epsilon>0$ the existence of a $\delta>0$ such that if $d(x,y)/t<\delta$, we have
\[
p_t^{\ZZ^2}(z_1,z_2)\leq \exp\left(-\frac{d(z_1,z_2)^2}{2(4+\epsilon)t}\right).
\]
This bound is clearly worse than the former.

A way to overcome this issue was found by Davies by considering Gaussian terms involving metrics different from the combinatorial distance. For a graph $(X,b,m)$,
set 
\[
\rho_\cE(x,y)=\sup\left\{|\psi(x)-\psi(y)|\colon \psi \in \cC_c(X)\cap \cD(\cE), \|\Gamma(\psi)\|_\infty\leq 1\right\}
\]
for all $x,y\in X$.
Following the terminology of \cite{Davies-93a},
a graph $b$ is called regular if there exists a constant $S>0$ such that 
\[
|\psi(x)-\psi(y)|\leq S
\]
for all $x,y\in X$ with $b(x,y)>0$ and $\psi \in \cC_c(X)\cap \cD(\cE)$ with $\|\Gamma(\psi)\|_\infty\leq 1$. According to \cite[Lemma~5.1]{Davies-93a}, if $b$ is a graph over $(X,1)$ and $b$ is bounded below uniformly by a positive constant on the set of edges, then the graph $b$ is $S$-regular with 
$$S=\sup_{x,y\in X, b(x,y)>0}\sqrt{2m(x)/b(x,y)}.$$
For regular graphs, we have the following.
\begin{theorem}[{cf.~\cite[Theorem~5.4, Corollary~5.6]{Davies-93a}}]\label{thm:davies1}
If $S>0$ and $b$ is an $S$-regular graph over $(X,1)$, then we have
\[
p_t(x,y)\leq \frac{1}{\sqrt{m(x)m(y)}}\exp\left(-\frac{2t}{S^2}\ \zeta\left(\frac{\rho_\cE(x,y)S}{2t}\right)\right)
\]
for all $x,y\in X$ and $t>0$. In particular, for any $\epsilon>0$ there exists $\delta=\delta(S,\epsilon)>0$ such that if $\rho_\cE(x,y)/t<\delta$, then 
\[
p_t(x,y)\leq  \frac{1}{\sqrt{m(x)m(y)}}\exp\left(-
\frac{\rho_\cE(x,y)^2}{(4+\epsilon)t}\right).
\]
\end{theorem}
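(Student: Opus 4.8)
The plan is to derive Theorem~\ref{thm:davies1} as an instance of the universal bound for intrinsic metrics, Theorem~\ref{thm:BHY}, after verifying that regularity of $b$ converts the energy distance $\rho_\cE$ into an intrinsic metric with controlled jump size. The key observation is that $\rho_\cE$ is, up to the normalization built into the definition, an intrinsic metric: if $\rho$ is defined by a supremum over functions $\psi$ with $\|\Gamma(\psi)\|_\infty\le 1$, then for any such $\psi$ and any $x$, the inequality $\Gamma(\psi)(x)\le 1$ unfolds to $\frac{1}{2m(x)}\sum_y b(x,y)(\psi(x)-\psi(y))^2\le 1$, and testing against the function $y\mapsto \psi(y)$ near $x$ shows that $\sum_y b(x,y)\rho_\cE(x,y)^2$ is comparable to $m(x)$ up to a factor; chasing the constant $2$ one sees that $\rho_\cE/\sqrt 2$ (or a similarly rescaled variant) is intrinsic in the sense of the paper. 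Regularity of $b$ then says precisely that this intrinsic metric has jump size $S_0\le S$ (more carefully, $\rho_\cE$ has jump size $\le S$, so the rescaled intrinsic metric has jump size $\le S/\sqrt 2$; the factors of $2$ versus $4$ in the exponent are exactly the bookkeeping that distinguishes Davies' normalization from Pang's, as already flagged in the discussion after the Davies--Pang theorem).

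**First I would** fix a regular graph $b$ over $(X,1)$ with regularity constant $S$, and set $\widetilde\rho = \rho_\cE$. I would check the two defining conditions of an intrinsic metric for a suitable multiple of $\widetilde\rho$: the pseudo-metric property is immediate since $\rho_\cE$ is a supremum of differences of $1$-Lipschitz-type functions; the summability condition $\sum_y b(x,y)\rho_\cE(x,y)^2\le 2m(x)$ follows by approximating the ``distance-to-a-point'' function, exactly as in the standard fact (noted in the paper right after the definition of intrinsic metric) that $\Gamma(\rho(x,\cdot))\le 1/2$. Next I would identify the jump size: for $b(x,y)>0$, the regularity hypothesis gives $|\psi(x)-\psi(y)|\le S$ for all admissible $\psi$, hence $\rho_\cE(x,y)\le S$, so the rescaled intrinsic metric has jump size at most $S/\sqrt2$. **Then** I would plug this into Theorem~\ref{thm:BHY} with the measure $m$ (here the counting measure, though the argument is measure-agnostic): writing $\rho$ for the rescaled intrinsic metric with jump size $S' := S/\sqrt2$, Theorem~\ref{thm:BHY} yields
\[
p_t(x,y)\le \frac{1}{\sqrt{m(x)m(y)}}\exp\left(-\frac{t}{(S')^2}\zeta\left(\frac{\rho(x,y)S'}{t}\right)\right),
\]
and substituting $S'=S/\sqrt2$ and $\rho = \rho_\cE/\sqrt2$ converts $t/(S')^2 = 2t/S^2$ and $\rho(x,y)S'/t = \rho_\cE(x,y) S/(2t)$, producing exactly the claimed bound. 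The ``in particular'' clause is then the standard consequence of the small-argument asymptotics $\zeta(u)\sim u^2/2$ as $u\to 0$: for $\rho_\cE(x,y)/t$ small enough (depending on $S,\epsilon$), $\frac{2t}{S^2}\zeta(\rho_\cE S/(2t))\ge \frac{\rho_\cE(x,y)^2}{(4+\epsilon)t}$, which is how the $4+\epsilon$ rather than $2+\epsilon$ arises from the $2t/S^2$ prefactor.

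**The main obstacle** I expect is getting the constants and normalizations exactly right: the definition of $\rho_\cE$ uses $\|\Gamma(\psi)\|_\infty\le 1$ whereas an intrinsic metric $\rho$ automatically only gives $\Gamma(\rho(x,\cdot))\le 1/2$, so there is a factor-$2$ (equivalently $\sqrt2$ in distance) discrepancy that must be tracked consistently through the jump size, the prefactor $t/S^2$, and the argument of $\zeta$; a sign error here would turn the correct $2t/S^2$ into $t/S^2$ or $4t/S^2$. A secondary point requiring care is the approximation argument needed to pass from the supremum over $\psi\in\cC_c(X)\cap\cD(\cE)$ in the definition of $\rho_\cE$ to the pointwise intrinsic inequality $\sum_y b(x,y)\rho_\cE(x,y)^2\le 2m(x)$, since $\rho_\cE(x,\cdot)$ itself need not lie in $\cC_c(X)$; this is handled by truncation and the monotone/dominated convergence available from local finiteness, and is the place where one invokes \cite{Davies-93a} for the precise statement. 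Beyond these bookkeeping matters the proof is a direct reduction, and indeed the text already signals this by noting that ``the proof of Theorem~\ref{thm:BHY} is very close to the proof of Theorem~\ref{thm:davies1}'', the difference being that here the intrinsic property is supplied by the regularity hypothesis rather than assumed outright.
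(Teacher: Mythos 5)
Your reduction to Theorem~\ref{thm:BHY} founders on one specific claim: that $\rho_\cE/\sqrt2$ is itself an intrinsic metric, i.e.\ that $\sum_y b(x,y)\rho_\cE(x,y)^2\le 2m(x)$. This is false in general, and the standard fact you invoke runs in the wrong direction (the intrinsic property implies $\Gamma(\rho(x,\cdot))\le 1/2$, not conversely). The point is that $\rho_\cE$ is a supremum of the pseudo-metrics $\rho_\psi(x,y)=|\psi(x)-\psi(y)|$; each $\rho_\psi/\sqrt2$ is intrinsic, since $\sum_y b(x,y)|\psi(x)-\psi(y)|^2=2m(x)\Gamma(\psi)(x)\le 2m(x)$, but a supremum of intrinsic metrics need not be intrinsic — the paper says exactly this about $\rho_S$ in Section~\ref{sec:optimal}. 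Concretely, on a star with center $x$ and $N$ leaves, $b\equiv 1$ on edges and $m\equiv 1$, one has $\rho_\cE(x,y_i)=\sqrt2$ for every leaf (take $\psi$ supported on $\{x,y_i\}$ with increment $\sqrt 2$), so $\sum_i b(x,y_i)\rho_\cE(x,y_i)^2=2N>2m(x)$ for $N\ge 2$. The near-maximizers for different neighbours are different test functions, and nothing controls the sum of their increments simultaneously.

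The strategy is nevertheless salvageable, and the repair is short: apply Theorem~\ref{thm:BHY} to each $\rho_\psi/\sqrt2$ separately (by $S$-regularity its jump size is at most $S/\sqrt2$), obtaining for every admissible $\psi$ the bound with $-\tfrac{2t}{S^2}\zeta\bigl(|\psi(x)-\psi(y)|S/(2t)\bigr)$ in the exponent, and then take the infimum of these bounds over $\psi$ for fixed $x,y,t$; since $\zeta$ is increasing, this yields the exponent with $\rho_\cE(x,y)=\sup_\psi|\psi(x)-\psi(y)|$. Your constant bookkeeping and the derivation of the $(4+\epsilon)$ clause from $\zeta(u)\sim u^2/2$ are correct. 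Note that the paper proves the theorem by a different route: it starts from Davies' off-diagonal bound in terms of the Legendre transform of $f(\lambda)=\sup_\psi\|\Gamma(\euler^{\lambda\psi},\euler^{-\lambda\psi})\|_\infty$ and estimates $f$ via regularity and the monotonicity of $t\mapsto(\cosh t-1)/t^2$, rather than reducing to the intrinsic-metric theorem.
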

\begin{proof}
The bound on the heat kernel found in \cite[Theorem~5.4]{Davies-93a} reduces to
\[
p_t(x,y)\leq  \frac{1}{\sqrt{m(x)m(y)}}\exp\left(t\hat f(\rho_\cE(x,y)/t)\right),
\]
where 
\[
\hat f(\gamma)=\min_{\lambda>0} -\gamma\lambda+f(\lambda)
\]
is the Legendre transform of the function 
\[
f(\lambda)=\sup_\psi \|\Gamma(\euler^{\lambda\psi},\euler^{-\lambda\psi})\|_\infty=\sup_{\psi}\sup_{x\in X}\frac{1}{m(x)}\sum_{y\in X}b(x,y)\left(\cosh\left(\lambda(\psi(x)-\psi(y))\right)-1\right),
\]
where the first suprema run over all $\psi\in\cC_c(X)$ with $\|\Gamma(\psi)\|_\infty\leq 1$. The following argument is borrowed from \cite{BauerHuaYau-17} and applied to the present setting:
since $b$ is $S$-regular and 
\[
t\mapsto \frac{1}{t^2}(\cosh(t)-1)
\]
is increasing, we obtain
\begin{multline*}
f(\lambda)\leq \sup_\psi\sup_{x}\frac{1}{m(x)}\sum_{y}b(x,y)|\psi(x)-\psi(y)|^2S^{-2}(\cosh(\lambda S)-1)
\\
\leq 2\|\Gamma(\psi)\|_\infty S^{-2}(\cosh(\lambda S)-1)
\leq 2 S^{-2}(\cosh(\lambda S)-1).
\end{multline*}
Therefore,
\[
\hat f(\gamma)\leq \min_{\lambda>0}-\gamma\lambda +2S^{-2}(\cosh(\lambda S)-1)=-2S^{-2}\zeta(\gamma S/2).
\]
The in-particular statement follows as in \cite[Corollary~5.6]{Davies-93a}.
\end{proof}

\begin{remark}
Comparing Davies' sharp Gaussian term appearing in  the proof of Theorem~\ref{thm:davies1} with the Gaussian term in Theorem~\ref{thm:ckkw}, we have that the function $f$ is strictly less than $\Lambda(\psi)^2$. Hence, the Gaussian term in Theorem~\ref{thm:davies1} improves upon the Gaussian in Theorem~\ref{thm:ckkw}.
\end{remark}

For the combinatorial graph $(\ZZ^2, b,1 )$ introduced above, Davies shows that $\rho_\cE$ captures the rotational symmetry of $\RR^2$ where $\ZZ^2$ embeds in although this is not the case for the graph itself.
\begin{theorem}[{\cite[Theorem~6.4]{Davies-93a}}]
On $(\ZZ^2,b,1)$, we have 
\[
\rho_\cE(x,y)\sim |x-y|,\qquad |x-y|\to \infty.
\]
\end{theorem}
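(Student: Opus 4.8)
The plan is to compute the intrinsic metric $\rho_\cE$ on $(\ZZ^2,b,1)$ explicitly enough to see its asymptotic behaviour, exploiting that it is defined as a supremum over Lipschitz-type functions with $\|\Gamma(\psi)\|_\infty\leq 1$. First I would record the lower bound, which is the easy direction: for any unit coordinate vector direction and any pair $x,y$, the linear function $\psi(z)=\langle z,v\rangle/\sqrt{\max_w \Gamma(\langle\cdot,v\rangle)(w)}$ (suitably truncated to lie in $\cC_c(X)$, which only costs a bounded additive error as $|x-y|\to\infty$) is admissible, and optimizing $v$ over the unit sphere in $\RR^2$ gives $\rho_\cE(x,y)\gtrsim c\,|x-y|$ for some constant $c>0$. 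On $\ZZ^2$ with standard weights one has $\Gamma(\psi)(z)=\tfrac12\sum_{w\sim z}(\psi(z)-\psi(w))^2$, so for $\psi(z)=z_1$ one gets $\Gamma=1$, and more generally for a direction $v=(\cos\phi,\sin\phi)$ the normalizing constant is $\sqrt{\cos^2\phi+\sin^2\phi}=1$ only after accounting for both neighbours in each axis — the point is that this constant is bounded above and below, so the lower bound $\rho_\cE(x,y)\geq c|x-y|-C$ holds.

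The harder direction is the matching upper bound $\rho_\cE(x,y)\leq (1+o(1))|x-y|$, i.e.\ one must show no admissible $\psi$ can separate $x$ and $y$ by substantially more than their Euclidean distance, and moreover that the optimal constant is exactly $1$. Here I would argue via a geodesic/path argument: fix an admissible $\psi$ with $\|\Gamma(\psi)\|_\infty\leq 1$, and a path $x=x_0\sim x_1\sim\cdots\sim x_n=y$; then $|\psi(x)-\psi(y)|\leq\sum_k|\psi(x_k)-\psi(x_{k-1})|$. The constraint $\Gamma(\psi)(z)\leq 1$ controls $\sum_{w\sim z}(\psi(z)-\psi(w))^2\leq 2$ at each vertex, which by itself only gives each edge increment a bound of $\sqrt 2$ and hence $\rho_\cE\leq\sqrt 2\,d$, which is not sharp. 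To recover the sharp Euclidean constant I would instead relate $\rho_\cE$ to the intrinsic metrics of Section~\ref{sec:universal}: any intrinsic pseudo-metric $\rho$ with $\sum_w b(z,w)\rho(z,w)^2\leq m(z)=1$ satisfies $\Gamma(\rho(u,\cdot))(z)\leq 1/2$, and conversely $\rho_\cE$ dominates every such $\rho$ up to the factor coming from $1/2$ versus $1$. Choosing $\rho$ to be (a scaling of) the genuine Euclidean distance restricted to $\ZZ^2$ — which is intrinsic for $\ZZ^2$ with the counting measure after checking $\sum_{w\sim z}|z-w|^2 = \tfrac12\cdot 4 = 2 \leq ?$, so one needs the rescaled metric $\tfrac1{\sqrt 2}|\cdot-\cdot|$ — gives $\rho_\cE(x,y)\geq\tfrac1{\sqrt2}|x-y|$, consistent with, but not obviously equal to, the claimed asymptotics. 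The main obstacle is therefore pinning down the constant $1$ exactly: this requires showing that the supremum in $\rho_\cE$ is asymptotically achieved by functions behaving like affine functions $z\mapsto\langle z,v\rangle$ with $|v|=1$, for which $\|\Gamma(\psi)\|_\infty$ on $\ZZ^2$ equals $\tfrac12\big((v_1)^2+(v_1)^2+(v_2)^2+(v_2)^2\big)=|v|^2=1$; I would make this rigorous by a compactness/variational argument showing extremal $\psi$ must be "locally affine" away from a negligible set, or alternatively invoke \cite[Theorem~6.4]{Davies-93a}'s own computation.

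Concretely the steps in order: (1) reduce the statement to showing $c_1\leq\rho_\cE(x,y)/|x-y|\leq c_2$ asymptotically with $c_1=c_2=1$; (2) prove the lower bound via truncated affine test functions along an optimal direction, handling the truncation by noting it contributes only an additive constant; (3) prove the upper bound by the path argument combined with the observation that along a nearly-straight lattice path of combinatorial length $\sim|x-y|$ (possible since $\ZZ^2$ paths can track any line with bounded deviation) the total increment of a locally-affine $\psi$ is $\sim|x-y|$; (4) upgrade both constants to $1$ by the variational/extremal-function analysis. I expect step (4) — identifying the precise extremal constant rather than a constant in $(c_1,c_2)$ — to be the main obstacle, and the cleanest route is probably to cite Davies' own lemmas on regular graphs (Theorem~\ref{thm:davies1} and the $S$-regularity of $(\ZZ^2,b,1)$ with $S=\sqrt 2$) to sandwich $\rho_\cE$ between the Euclidean distance and itself up to lower-order terms.
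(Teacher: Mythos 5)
The paper does not prove this statement---it is quoted from Davies---so your proposal has to stand on its own, and as it stands it has a genuine gap exactly where you predicted one: the sharp upper bound $\rho_\cE(x,y)\le(1+o(1))\,|x-y|$. Your lower bound is essentially fine: for $\psi(z)=\langle z,v\rangle$ with $|v|=1$ one computes $\Gamma(\psi)(z)=\tfrac12(v_1^2+v_1^2+v_2^2+v_2^2)=1$, and a careful cutoff (a min with a transverse slab function of slope $\eps$, renormalized by $\sqrt{1+C\eps^2}$) keeps the function in $\cC_c$ at asymptotically no cost, giving $\rho_\cE(x,y)\ge(1-o(1))|x-y|$. (Two small slips there: a truncation must be checked not to increase $\Gamma$ at the cutoff locus, and in your intrinsic-metric aside the condition $\sum_{w\sim z}b(z,w)\rho(z,w)^2\le m(z)=1$ carries no factor $\tfrac12$, so the correctly rescaled Euclidean metric is $\tfrac12|\cdot-\cdot|$, not $\tfrac1{\sqrt2}|\cdot-\cdot|$; your conclusion $\rho_\cE\ge\tfrac1{\sqrt2}|x-y|$ survives via part (ii) of the lemma, but is anyway superseded by the affine test functions.)

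For the upper bound, none of your three proposed routes closes the argument. The edgewise path bound gives $|\psi(x)-\psi(y)|\le\sqrt2\,d(x,y)=\sqrt2\,\|x-y\|_1$; even exploiting that consecutive path edges share a vertex, so that successive increments $\alpha_k,\alpha_{k+1}$ satisfy $\alpha_k^2+\alpha_{k+1}^2\le2$, only improves this to $\|x-y\|_1$, which on the diagonal exceeds $|x-y|$ by a factor $\sqrt2$. The appeal to Davies' Theorem~6.4 is circular (it is the statement being proved), and sandwiching via $S$-regularity and $\rho_S$ merely transfers the problem: bounding $\rho_S$ above by $\tfrac1{\sqrt2}|x-y|$ is the same difficulty. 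The missing idea is to use the constraint $\Gamma(\psi)\le1$ in integrated rather than pointwise form: summing over a region $Q$, each forward difference $\partial_i\psi(z)=\psi(z+e_i)-\psi(z)$ is counted twice up to boundary terms, so $\sum_{z\in Q}\bigl((\partial_1\psi(z))^2+(\partial_2\psi(z))^2\bigr)\le|Q|+O(\partial Q)$. One then averages $\psi(y)-\psi(x)$ over a thin tube of (nearly) parallel lattice paths joining neighborhoods of $x$ and $y$, applies Cauchy--Schwarz along each path, and uses the $L^2$ gradient bound over the tube; choosing the tube width $w$ with $1\ll w\ll|x-y|$ makes the endpoint oscillation and boundary errors lower order and yields the Euclidean constant $1$. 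Without some such averaging step, the ``extremal functions are locally affine'' claim in your step (4) is an assertion, not a proof.
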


Davies' approach only yields a non-trivial bound for the heat kernel of a graph if the graph is regular. 
A different approach is now described in terms of the notion of intrinsic metrics.

\eat{
The proof of Theorem~\ref{thm:BHY} is very close to the proof of Theorem~\ref{thm:davies1}, where the  intrinsic property of the metric replaces the regularity assumption on the graph.
}

Since a rescaling of the combinatorial distance is intrinsic on the graph $(\ZZ^2,b,4)$ mentioned above, the same factorization problem applies. Different from  Davies' ansatz, we maximize intrinsic metrics with fixed jump size. Since there is always an intrinsic metric with given jump size, cf.~Example~\ref{ex:pathdegreemetric}, 
the estimate on the heat kernel of Theorem~\ref{thm:BHY} can be trivially improved using 
\[
\rho_S(x,y)=\sup\left\{\rho(x,y)\colon\rho \text{ is intrinsic with jump size bounded by } S\right\}
\]
for any $x,y\in X$ for given $S>0$.
\eat{ and similarly
\[
\rho_\infty(x,y)=\sup\left\{\rho(x,y)\colon\rho \text{ is intrinsic}\right\}.
\]
}
In general, $\rho_S$ is not intrinsic for any $S>0$ but a pseudo metric, and takes finite values; this follows from the connectedness of the graph together with the triangle inequality and bounded jump size of $\rho_S$. By optimizing over all intrinsic metrics with given jump size, Theorem~\ref{thm:BHY} immediately yields the following. 
\begin{corollary}\label{cor:bhy}Let $b$ be a graph over $(X,m)$ and $S>0$. Then, we have
\[
p_t(x,y)\leq \frac{1}{\sqrt{m(x)m(y)}}\exp\left(-\frac{t}{S^{2}}\zeta\left(\frac{\rho_S(x,y)S}{t}\right)\right)
\]
for all $x,y\in X$, and $t>0$. Moreover, for any $\epsilon>0$ there is $\delta=\delta(S,\epsilon)>0$ such that if $\rho_S(x,y)/t<\delta$, then 
\[
p_t(x,y)\leq \frac{1}{\sqrt{m(x)m(y)}}\exp\left(-\frac{\rho_S(x,y)^2}{(2+\epsilon)t}\right).
\]
\end{corollary}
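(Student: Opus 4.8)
The plan is to derive Corollary~\ref{cor:bhy} directly from Theorem~\ref{thm:BHY} by a supremum argument, exploiting two facts: first, that for every fixed jump size $S>0$ there exists \emph{some} intrinsic metric with jump size bounded by $S$ (namely the path degree metric of Example~\ref{ex:pathdegreemetric}), so the supremum defining $\rho_S$ is taken over a nonempty family and $\rho_S(x,y)\geq 0$ is well-defined; and second, that Theorem~\ref{thm:BHY} applies to \emph{each} member of this family with the \emph{same} right-hand side prefactor $1/\sqrt{m(x)m(y)}$ and the same $S$ in the Gaussian term. The only genuine input beyond Theorem~\ref{thm:BHY} is a monotonicity observation: for fixed $t>0$ and fixed $x,y$, the map $r\mapsto \exp\!\big(-\tfrac{t}{S^2}\zeta(\tfrac{rS}{t})\big)$ is nonincreasing in $r\geq 0$, because $\zeta$ is nondecreasing on $[0,\infty)$ (indeed $\zeta'(x)=\arsinh(x)\geq 0$). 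Hence applying Theorem~\ref{thm:BHY} to an intrinsic metric $\rho$ with jump size $\leq S$ and then replacing $\rho(x,y)$ by the larger quantity $\rho_S(x,y)$ only decreases the exponential factor, which is the wrong direction; so instead I take the supremum the other way.

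More precisely, first I fix $x,y\in X$ and $t>0$. For any intrinsic metric $\rho$ with jump size bounded by $S$, Theorem~\ref{thm:BHY} gives
\[
p_t(x,y)\leq \frac{1}{\sqrt{m(x)m(y)}}\exp\left(-\frac{t}{S^{2}}\zeta\left(\frac{\rho(x,y)S}{t}\right)\right).
\]
Since the left-hand side does not depend on $\rho$, I may take the infimum over all such $\rho$ of the right-hand side; equivalently, using the monotonicity of $\zeta$, this infimum equals $\frac{1}{\sqrt{m(x)m(y)}}\exp\!\big(-\tfrac{t}{S^2}\zeta(\tfrac{\rho_S(x,y)S}{t})\big)$, where $\rho_S(x,y)=\sup_\rho \rho(x,y)$. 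This requires only that $\zeta$ is continuous and nondecreasing, so that $\sup_\rho \zeta(\tfrac{\rho(x,y)S}{t})=\zeta(\tfrac{\rho_S(x,y)S}{t})$; finiteness of $\rho_S(x,y)$ (needed so the expression makes sense) follows as noted in the text from connectedness, the triangle inequality, and the uniform bound $S$ on jump sizes along any path. This establishes the first displayed inequality of the corollary.

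For the in-particular statement, I proceed exactly as in the corresponding passages of Theorem~\ref{thm:BHY} and Theorem~\ref{thm:davies1}: one uses the asymptotic behavior of $\zeta$ near zero, namely $\zeta(x)=\tfrac{x^2}{2}+O(x^4)$ as $x\to 0^+$ (which follows from $\arsinh(x)=x-\tfrac{x^3}{6}+O(x^5)$ and $\sqrt{x^2+1}=1+\tfrac{x^2}{2}+O(x^4)$). Thus, given $\epsilon>0$, there is $\eta=\eta(\epsilon)>0$ such that $\zeta(x)\geq \tfrac{x^2}{2+\epsilon}$ for all $0\le x\le \eta$. Choosing $\delta=\delta(S,\epsilon)$ so that $\rho_S(x,y)/t<\delta$ forces $\tfrac{\rho_S(x,y)S}{t}\le\eta$ (e.g.\ $\delta=\eta/S$), one gets
\[
\frac{t}{S^2}\,\zeta\left(\frac{\rho_S(x,y)S}{t}\right)\ \geq\ \frac{t}{S^2}\cdot\frac{1}{2+\epsilon}\left(\frac{\rho_S(x,y)S}{t}\right)^2=\frac{\rho_S(x,y)^2}{(2+\epsilon)t},
\]
and substituting into the main bound yields the claimed Gaussian estimate.

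I do not expect a serious obstacle here: the corollary is a soft consequence of Theorem~\ref{thm:BHY}, and the only point requiring a line of care is the interchange of the supremum over $\rho$ with the monotone continuous function $\zeta$, together with the remark that $\rho_S$, while generally only a pseudo-metric and not intrinsic, is finite-valued. The asymptotic expansion of $\zeta$ at $0$ is standard and is already invoked implicitly in the statements of Theorems~\ref{thm:BHY} and~\ref{thm:davies1}, so I would simply refer to those.
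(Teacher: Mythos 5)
Your argument is correct and is exactly the route the paper takes: the paper states that Corollary~\ref{cor:bhy} follows ``immediately'' from Theorem~\ref{thm:BHY} by optimizing over all intrinsic metrics with jump size bounded by $S$, and your write-up simply supplies the routine details (monotonicity and continuity of $\zeta$ to pass the supremum inside, finiteness of $\rho_S$, and the expansion $\zeta(x)=x^2/2+O(x^4)$ for the in-particular statement). No gaps.
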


Theorem~\ref{thm:davies1} and Corollary~\ref{cor:bhy} both provide heat kernel bounds depending on different metrics. An interesting question is whether or not these metrics coincide. A partial  answer is given by the following lemma. 
\begin{lemma}Let $S>0$ and $b$ be a graph over $(X,m)$. 
\begin{enumerate}[(i)]
\item Define 
\[
\rho_S'(x,y)=\sup\{\rho(x,y)\colon \rho \ \text{is intrinsic with finite balls and jump size bounded by } S\},
\]
for all $x,y\in X$. Then, we have 
\[
\sqrt 2\rho_S'\leq \rho_\cE.
\]
\item If $\sup\Deg<\infty$, then
\[
\sqrt 2\rho_S\leq \rho_\cE.
\]
\item If $b$ is $\sqrt 2S$-regular, then $$\sqrt 2\rho_S\geq \rho_\cE.$$ 
\end{enumerate}
\end{lemma}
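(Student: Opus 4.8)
The three inequalities all compare the edge-restricted supremum defining $\rho_\cE$ (where test functions $\psi$ satisfy $\|\Gamma(\psi)\|_\infty\le 1$) with suprema over intrinsic pseudo-metrics of bounded jump size. The unifying principle is that the condition $\|\Gamma(\psi)\|_\infty\le 1$ means precisely $\tfrac{1}{2m(x)}\sum_y b(x,y)(\psi(x)-\psi(y))^2\le 1$ for all $x$, i.e.\ $\sum_y b(x,y)(\psi(x)-\psi(y))^2\le 2m(x)$; so if one \emph{additionally} controls the jump size by insisting $|\psi(x)-\psi(y)|\le \sqrt2\,S$ on edges, then the edge-weight function $w(x,y):=\tfrac{1}{\sqrt2}|\psi(x)-\psi(y)|$ on edges (and $\infty$ otherwise) generates an intrinsic path metric with jump size $\le S$. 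This is the bridge between the two worlds and it is where the factor $\sqrt2$ comes from.

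For \textbf{(i)}, I would argue as follows. Fix $x,y$ and let $\rho$ be any intrinsic metric with finite balls and jump size $\le S$. The candidate test function is $\psi:=\rho(x,\cdot)\wedge R$ truncated at a large radius $R>\rho(x,y)$; truncation makes $\psi$ compactly supported precisely because the balls are finite, which is why the finite-balls hypothesis appears in (i) but not in (iii). By the triangle inequality for $\rho$ one has $\Gamma(\rho(x,\cdot))(z)\le \tfrac12$ at every $z$ (this is stated in the text right after the definition of intrinsic metric), and truncation only decreases the energy, so $\Gamma(\psi)(z)\le\tfrac12$, i.e.\ $\|\Gamma(\psi)\|_\infty\le\tfrac12$. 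Hence $\sqrt2\,\psi$ is admissible in the $\rho_\cE$-supremum, giving $\rho_\cE(x,y)\ge \sqrt2\,|\psi(x)-\psi(y)| = \sqrt2\,(\rho(x,y)\wedge R)$; let $R\to\infty$ and take the supremum over all such $\rho$ to get $\rho_\cE\ge\sqrt2\,\rho_S'$. One subtlety: one must check $\psi\in\cD(\cE)$ — but $\psi$ is bounded with bounded support on a locally finite graph, so it lies in $\cC_c(X)\subset\cD(\cE)$.

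For \textbf{(ii)}, the point is that when $\sup_X\Deg<\infty$ the Laplacian is bounded, $\cC_c(X)$ is a form core, and — more to the point — every intrinsic metric of jump size $\le S$ automatically has finite balls (bounded geometry forces $\#B_x(r)<\infty$ since the combinatorial degree is controlled and each edge has $\rho$-length bounded below; more carefully, one uses that under $\sup\Deg<\infty$ intrinsic metrics and a rescaling of $d$ are comparable, cf.\ Example~\ref{ex:pathdegreemetric}, so balls of finite $\rho$-radius are combinatorially bounded hence finite). Therefore $\rho_S'=\rho_S$ under this hypothesis, and (ii) follows immediately from (i). If one prefers to avoid the comparability argument, an alternative is to approximate a general intrinsic $\rho$ of jump size $\le S$ by the finite-ball intrinsic metrics $\rho\wedge(\text{rescaled }d)$ and pass to the limit.

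For \textbf{(iii)}, the inequality runs the other way: given $\psi\in\cC_c(X)\cap\cD(\cE)$ with $\|\Gamma(\psi)\|_\infty\le1$, I want to produce an intrinsic metric of jump size $\le S$ that separates $x$ and $y$ by at least $\tfrac{1}{\sqrt2}|\psi(x)-\psi(y)|$. By $\sqrt2 S$-regularity, $|\psi(u)-\psi(v)|\le \sqrt2\,S$ for every edge $u\sim v$, so I may define $w(u,v)=\tfrac1{\sqrt2}|\psi(u)-\psi(v)|$ on edges (and $\infty$ off edges) and let $\rho_\psi$ be the induced path metric; its jump size is $\le S$. The intrinsic inequality $\sum_v b(u,v)\rho_\psi(u,v)^2\le m(u)$ holds because along edges $\rho_\psi(u,v)\le w(u,v)=\tfrac1{\sqrt2}|\psi(u)-\psi(v)|$, so $\sum_v b(u,v)\rho_\psi(u,v)^2\le \tfrac12\sum_v b(u,v)|\psi(u)-\psi(v)|^2=m(u)\,\Gamma(\psi)(u)\le m(u)$. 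Finally $\rho_\psi(x,y)\ge$ — hmm, here is the catch — one needs $\rho_\psi(x,y)\ge \tfrac1{\sqrt2}|\psi(x)-\psi(y)|$, which follows because for any path $x=x_0\sim\dots\sim x_n=y$, $\sum_j w(x_{j-1},x_j)=\tfrac1{\sqrt2}\sum_j|\psi(x_{j-1})-\psi(x_j)|\ge \tfrac1{\sqrt2}|\psi(x)-\psi(y)|$ by the triangle inequality for $|\cdot|$ on $\RR$. Taking the supremum over admissible $\psi$ gives $\rho_S(x,y)\ge\tfrac1{\sqrt2}\rho_\cE(x,y)$, i.e.\ $\sqrt2\,\rho_S\ge\rho_\cE$.

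\textbf{Main obstacle.} The routine part is the energy bookkeeping with the factor $\sqrt2$; the genuinely delicate points are the domain/core issues — ensuring the truncated distance function in (i) is a legitimate element of $\cD(\cE)$ and that the finite-balls hypothesis is exactly what is needed for truncation to yield compact support, and in (ii) correctly identifying $\rho_S'=\rho_S$ under bounded geometry. I expect the subtle interplay between "finite balls'' in (i) and its absence in (iii) to be the step a careful reader will scrutinize most, since in (iii) one builds an intrinsic \emph{path} metric directly from $\psi$ without ever needing completeness or finite balls, whereas in (i) one goes the other direction and must truncate.
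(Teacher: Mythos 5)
Your overall strategy for (i) and (iii) matches the paper's, and your part (iii) is correct: where the paper observes directly that $(u,v)\mapsto|\psi(u)-\psi(v)|/\sqrt2$ is already an intrinsic pseudo-metric whose jump size is at most $S$ by $\sqrt2 S$-regularity, you route through the induced path metric with edge weights $|\psi(u)-\psi(v)|/\sqrt2$; both constructions yield the same conclusion. In part (i), however, your truncation is wrong as written: $\psi=\rho(x,\cdot)\wedge R$ is \emph{not} compactly supported --- it equals the nonzero constant $R$ outside $B_x(R)$, so its support is essentially all of $X$ --- and the definition of $\rho_\cE$ only admits test functions in $\cC_c(X)\cap\cD(\cE)$. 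The paper's choice is $\psi_R=(R-\rho(x,\cdot))_+$, which differs from yours only by the additive constant $R$ (hence has the same energy and the same increments) but genuinely vanishes outside $B_x(R)$, so the finite-balls hypothesis makes it compactly supported. This is a one-line repair, but the step as you wrote it fails.

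Part (ii) has a genuine gap. It is not true that $\sup\Deg<\infty$ forces every intrinsic pseudo-metric of jump size at most $S$ to have finite balls: the intrinsic condition is an \emph{upper} bound on $\sum_y b(x,y)\rho(x,y)^2$ and gives no lower bound on individual edge lengths, so an intrinsic $\rho$ may assign length zero to infinitely many edges (the zero pseudo-metric is an extreme example), in which case $B_x^\rho(r)$ is infinite. Hence $\rho_S'\neq\rho_S$ in general and (ii) does not reduce to (i). Your proposed fallback also fails: the pointwise minimum of two pseudo-metrics need not satisfy the triangle inequality, so $\rho\wedge(\varepsilon d)$ is not an admissible competitor in the supremum defining $\rho_S'$. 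The paper argues differently: under $\sup\Deg<\infty$ one may, by \cite[Lemma~4.5]{Davies-93a}, replace $\cC_c(X)\cap\cD(\cE)$ by all of $\cC(X)$ in the definition of $\rho_\cE$, after which the untruncated function $\rho(x,\cdot)$ is itself admissible (its energy is at most $1/2$ by the triangle inequality and the intrinsic condition), and neither finite balls nor any approximation is needed.
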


\begin{proof}
\begin{enumerate}[(i)]
\item 
Fix $x,y\in X$ and let $\rho$ be intrinsic with finite balls, and jump size bounded by $S$. For $R>0$ define  $\phi_R=(1-\rho(x,\cdot)/R)_+$. Since $\rho$ is intrinsic, we have $\|\Gamma(\phi_R)\|_\infty\leq 1/2R^2$. Hence, the function $\psi_R=(R-\rho(x,\cdot))_+=R\phi_R$ satisfies
\[
\|\Gamma(\psi_R)\|_\infty=\frac{R^2}{2}\sup_{z\in X}\sum_{y\in X}b(z,y)|\phi_R(z)-\phi_R(y)|^2=R^2\|\Gamma(\phi_R)\|_\infty
\leq 
\frac{1}{2}.
\]
Thus, $\|\Gamma(\sqrt 2\psi_R)\|_\infty\leq 1$.
Since $\rho$ satisfies the finite balls condition, $\sqrt 2\psi_R\in\cC_c(X)$ for any $R>0$. Moreover, since the graph is connected and the jump size of $\rho$ is finite, we have $\rho(x,y)<\infty$. If we choose $R\geq \rho(x,y)$, we obtain
\[
\sqrt 2\rho(x,y)=\sqrt 2R-\sqrt 2(R-\rho(x,y))_+=
\sqrt 2\psi_R(x)-\sqrt 2\psi_R(y)\leq \rho_\cE(x,y).
\]
Taking the supremum with respect to $\rho$ yields the claim.
\item 
The condition on the vertex degree implies
\[
\rho_\cE(x,y)=\sup\{|\psi(x)-\psi(y)|\colon \psi\in\cC(X),\|\Gamma(\psi)\|_\infty\leq 1\}
\]
for all $x,y\in X$, \cite[Lemma~4.5]{Davies-93a}. Fix $x,y\in X$. The function $\psi_\rho=\rho(x,\cdot)$ satisfies
\[
\|\Gamma(\psi_\rho)\|_\infty=\frac{1}{2}\sup_{z\in X}\sum_{y\in X}b(z,y)|\rho(x,z)-\rho(x,y)|^2
\leq 
\frac{1}{2}\sup_{z\in X}\sum_{y\in X}b(z,y)|\rho(x,z)|^2\leq \frac12.
\]
Therefore, $\|\Gamma(\sqrt 2\psi_\rho)\|_\infty\leq 1$ and hence
\[
\sqrt 2\rho(x,y)=|\sqrt 2\psi_\rho(x)-\sqrt2\psi_\rho(y)|\leq \rho_\cE(x,y).
\]
Taking the supremum with respect to $\rho$ yields the claim.
\item 
Let $b$ be regular with constant $S$ and $\psi\in \cC_c(X)\cap \cD(\cE)$ with $\|\Gamma(\psi)\|_\infty\leq 1$. Then, the function $$\rho_\psi\colon X\times X\to[0,\infty], \quad (x,y)\mapsto |\psi(x)-\psi(y)|$$ is a pseudometric and satisfies
\[
\sum_{y\in X}b(x,y)\rho_\psi(x,y)^2
=
\sum_{y\in X}b(x,y)|\psi(x)-\psi(y)|^2\leq  2\|\Gamma(\psi)\|_\infty\leq 2,
\]
i.e., $\rho_\psi/\sqrt 2$ is intrinsic. Since $b$ is regular with constant $\sqrt 2S$, the jump size of $\rho_{\psi}/\sqrt 2$ is bounded by $S$. Therefore,
\[
\rho_\psi(x,y)=|\psi(x)-\psi(y)|\leq \sqrt 2\rho_S(x,y)
\]
for any $x,y\in X$. Taking the supremum with respect to $\psi$ over the left-hand side yields the claim.
\qedhere
\end{enumerate}
\end{proof}
\eat{
\begin{lemma}Let $S>0$ and $b$ be a graph over $(X,m)$. Then, we have $\rho_S\leq \rho_\infty\leq \rho_\cE$. Moreover, if $b$ is $S$-regular, then $\rho_S\geq \rho_\cE$.
\end{lemma}
\begin{proof}
The first inequality is obvious, so we only need to show the second inequality.
Let $\rho$ be an intrinsic metric, $x\in X$, and set $\psi=\rho(x,\cdot)$. \Hmm{$\rho$ nicht kompakt getragen} We infer from the triangle inequality
\[
\Lambda(\psi)=\sup_{z\in X}\ \frac{1}{2}\sum_{y\in X}b(z,y)|\rho(x,z)-\rho(x,y)|^2
\leq \sup_{z\in X}\ \frac{1}{2}\sum_{y\in X}b(z,y)\rho(z,y)^2\leq \frac12
\]
and hence, by the definition of $\rho_\cE$, we have for any $y\in X$
\[
\rho(x,y)=|\rho(x,x)-\rho(x,y)|\leq \rho_\cE(x,y).
\]
Since the above argument works for any intrinsic metric $\rho$ and $x,y\in X$, taking the supremum over the left-hand side yields
\[
\rho_\infty\leq \rho_\cE
\]
on $X\times X$.
For the second statement, let $b$ be regular with constant $S$ and $\psi\in \cC_c(X)\cap \cD(\cE)$ with $\Lambda(\psi)\leq 1$. Then, the function $$\rho\colon X\times X\to[0,\infty], \quad (x,y)\mapsto |\psi(x)-\psi(y)|$$ is a pseudometric and moreover
\[
\sum_{y\in X}b(x,y)\rho(x,y)^2
=
\sum_{y\in X}b(x,y)|\psi(x)-\psi(y)|^2= 2\Lambda(\psi)\leq 2,
\]
such that $\rho$ is also intrinsic\Hmm{Definition von $\Lambda$ anpassen}. Since $b$ is regular with constant $S$, the jump size of $\rho$ is bounded by $S$. Therefore,
\[
\rho(x,y)=|\psi(x)-\psi(y)|\leq \rho_S(x,y)
\]
for any $x,y\in X$. Taking the supremum with respect to $\psi$ over the left-hand side yields the claim.
\end{proof}
}

A natural question is to which extent the results presented in this note can be sharpened by the use of $\rho_S$. 
Theorem~\ref{thm:ChenFolz} clearly generalizes immediately, and so does Theorem~\ref{thm:Bella}. However, how to include $\rho_S$ in Theorem~\ref{thm:main} remains elusive.

\begin{acknowledgements}
Support by the DFG grant  "\emph{Heat kernel behavior at infinity on graphs}", project number 540199605,
is gratefully acknowledged. The author thanks Alexander Grigor'yan and Matthias Keller for drawing his attention to the problem of best metrics for heat kernel bounds and Noema Nicolussi for a remark which improved the manuscript. Moreover, he thanks the University of Bielefeld for the hospitality during his recent visit as well as the organizers of the conference "MeRiOT 2024" in Varenna for creating a stimulating conference atmosphere.
\end{acknowledgements}

\bibliographystyle{alpha}
\newcommand{\etalchar}[1]{$^{#1}$}
{\scriptsize

}
\end{document}